\newtheorem{theorem}{Theorem}[section]
\newtheorem{lemma}[theorem]{Lemma}
\newtheorem{note}[theorem]{Note}
\newtheorem{prop}[theorem]{Proposition}
\newtheorem{cor}[theorem]{Corollary}
\newtheorem{rem}[theorem]{Remark}
\newtheorem*{Theorem1'}{Theorem 1'}
\theoremstyle{definition}
\theoremstyle{remark}
\numberwithin{equation}{section}
\newcommand \F{\mathbb{F}}
\begin{document}

\title[Artin-Schreier towers of finite fields]{Artin-Schreier towers of finite fields}

\author{Leandro Cagliero}
\address{CIEM-CONICET, FAMAF-Universidad Nacional de C\'ordoba, Argentina.}
\email{cagliero@famaf.unc.edu.ar}
\thanks{The first author was partially supported by grants from CONICET, FONCYT and SeCyT-UNC\'ordoba).}

\author{Allen Herman}
\address{Department of Mathematics and Statistics, Univeristy of Regina, Canada}
\email{Allen.Herman@uregina.ca}
\thanks{The second author was partially supported by NSERC grant 2023-00014}

\author{Fernando Szechtman}
\address{Department of Mathematics and Statistics, Univeristy of Regina, Canada}
\email{fernando.szechtman@gmail.com}
\thanks{The third author was partially supported by NSERC grant 2020-04062}

\subjclass[2010]{12E20,11T30}

\keywords{Finite fields, Artin-Schreier extensions, multiplicative order}

\begin{abstract} Given a prime number $p$, we consider the tower of finite fields $\F_p=L_{-1}\subset L_0\subset L_1\subset\cdots$,
where each step corresponds to an Artin-Schreier extension of degree $p$, so that for $i\geq 0$,
$L_{i}=L_{i-1}[c_{i}]$, where $c_i$ is a root of $X^p-X-a_{i-1}$ and $a_{i-1}=(c_{-1}\cdots c_{i-1})^{p-1}$, with $c_{-1}=1$.
We extend and strengthen to arbitrary primes prior
work of Popovych for $p=2$ on the multiplicative order $O(c_i)$ of the given generator $c_i$ for $L_i$ over $L_{i-1}$.
In particular, for $i\geq 0$, we show that $O(c_i)=O(a_i)$, except only when $p=2$ and $i=1$,
and that $O(c_i)$ is equal to the product of the orders of $c_j$
modulo $L_{j-1}^\times$, where $0\leq j\leq i$ if $p$ is odd, and $i\geq 2$ and $1\leq j\leq i$ if $p=2$.
We also show that for $i\geq 0$, the $\mathrm{Gal}(L_i/L_{i-1})$-conjugates of $a_i$ form a normal basis of $L_i$ over $L_{i-1}$.
In addition, we obtain the minimal polynomial of $c_1$ over $\F_p$ in explicit form.
\end{abstract}

\maketitle

\section{Introduction}

We fix throughout the paper a prime number $p$ and an algebraic closure $E$ of the field $\F_p$ with $p$ elements.
Recursively define $c_{-1},c_0,c_1,\dots\in E$ by taking $c_{-1}=1$
and selecting $c_{i+1}$ so that 
$$
c_{i+1}^p-c_{i+1}-(c_{-1}\cdots c_i)^{p-1}=0,\quad i\geq -1.
$$
We further set $L_{-1}=\F_p$, as well as
$$
L_{i+1}=L_i[c_{i+1}],\; a_i=(c_{-1}\cdots c_i)^{p-1},\quad i\geq -1,
$$
and
$$
N_i=\frac{p^{p^{i+1}}-1}{p^{p^i}-1},\quad i\geq 0.
$$
Then 
\begin{equation}\label{torref}
L_{-1}\subset L_0 \subset L_1 \subset L_2 \subset\cdots,
\end{equation}
is a tower of finite fields, where $[L_{i+1}:L_{i}]=p$ for all $i\geq -1$ and
\begin{equation}\label{sizeli}
|L_i^\times |=p^{p^{i+1}}-1=(p-1)N_0\cdots N_i,\quad i\geq -1.
\end{equation}

This construction was first given by Albert \cite{A} in 1934, following prior work by Artin and Schreier \cite{AS} in 1927.
More general algebraic extensions of finite fields are discussed in the book \cite{BS} by Brawley and Schnibben.
When $p=2$, the above towers are described in Conway's book \cite{C} as well as in Lenstra's notes \cite{L}.

The first extension $L_0/\F_p$ is connected to the period conjecture for the Bell numbers modulo $p$, which reduces to 
the question of whether or not the root $c_0$ of $X^p-X-1$ has multiplicative order $N_0$. This problem has been 
addressed, among others, by Wagstaff \cite{Wa}; Car, Gallardo, Rahavandrainy, and Vaserstein \cite{CGRV}; Montgomery, 
Nahm, and Wagstaff \cite{MNW}; and Gallardo in \cite{G1} and \cite{G2}.  While significant progress has been made, it 
remains open, as well as the question of whether $N_0$ is always square-free.

Albert's construction was rediscovered by Ito, Kashiwara, and Huling \cite{IKH}. They
give an algorithm to produce irreducible polynomials of high degree related to the tower (\ref{torref}),
and for $p=2$ they discuss the problem of finding primitive elements for each member of this tower.
While finding the exact orders of given elements in a finite field may be too difficult, it is sometimes 
possible to find lower bounds for the orders of these elements. There is a vast literature on the subject 
and we refer the reader to the paper \cite{BR} by Brochero Martínez and Reis, and the article 
\cite{DMPS} by Dose, Mercuri, Pal, and Stirpe, as well as references therein.

One of the most recent papers studying the tower (\ref{torref}) in the case $p=2$ is due to Popovych \cite{P}.
He gives a lower bound for the order $O(c_i)$ of $c_i$ as well as sufficient conditions for $O(c_i)$, $i\geq 2$, 
to attain its largest possible value, which he proves to be $N_1\cdots N_i$. Under these conditions, he shows that
one also has $O(a_i)=O(c_i)$, $i\geq 2$. As $O(c_0)=3$ and the Fermat numbers $N_i$, $i\geq 1$, are relatively 
prime to $N_0=3$, it follows from (\ref{sizeli}) that for $i\geq 2$, $O(c_i)=N_1\cdots N_i$ if and only if $c_0c_i$ 
is a primitive element of $L_i$. Questions of a similar nature had been addressed earlier by Wiedemann \cite{W}, 
who found other potentially primitive elements for $L_i$.  However, his construction of the fields in the tower 
(\ref{torref}) was different, and an explicit relationship between the two constructions was left open.
Seyfarth and Ranade \cite{SR} show how to translate the problem of the primitivity of Wiedemann's elements
into a specific feature of the characteristic polynomial of certain matrices over $F_2$.  
Recursive constructions of elements of predictably high order in finite fields are useful for the development of  
discrete logarithm ciphers and pseudo-random number generators, and there are added benefits if the 
elements of high order determine normal bases over a ground field (see \cite{PS}).

In this paper we extend Popovych's results from \cite{P} to all primes.  In particular, Theorem 
\ref{mismo} extends and strengthens \cite[Theorem 4]{P} by showing, unconditionally, that 
$O(c_i)=O(a_i)$ for all $i\geq 0$, except only when $p=2$ and $i=1$. In Corollary~\ref{z1}, which 
extends \cite[Theorem 3]{P}, we give a formula for the common order of $c_i$ and $a_i$ as a 
product of the orders of $c_j$ modulo $L_{j-1}^\times$, where $0\leq j\leq i$ if $p$ is odd, and 
$i\geq 2$ and $1\leq j\leq i$ if $p=2$.  Corollary~\ref{z2} provides necessary and sufficient 
conditions for $O(c_i)$ to attain its largest possible value, namely $N_0\cdots N_i$ if $p$ is odd, 
and $N_1\cdots N_i$ if $p=2$ and $i\geq 2$.  As $p-1$ is relatively prime to all $N_i$ (see 
Lemma \ref{relpri}), it follows from (\ref{sizeli}) that if $p$ is odd, $i\geq 0$, and $z$ is a primitive 
element of $\F_p$, then $O(c_i)=N_0\cdots N_i$ if and only if $zc_i$ is a primitive element of $L_i$.
Corollary~\ref{z3} extends \cite[Corollary 2]{P} and gives a lower bound for the order of $c_i$.  

We begin the paper by reviewing Albert's construction of Artin-Schreier field towers in Section \ref{dos}
and establishing their basic properties in Section \ref{secbp}.  Section \ref{main} is devoted to the proof 
of Theorem \ref{mismo} and its consequences.  In Section \ref{normal} and Section \ref{smin} we 
establish two further properties of interest.  Theorem \ref{normalbasis} shows that for all 
$i\geq 0$, the $\mathrm{Gal}(L_i/L_{i-1})$-conjugates of $a_i$ form a normal basis of $L_i$ over 
$L_{i-1}$.  Theorem \ref{x} establishes a connection between the minimal polynomials of $c_i$ and 
$a_{i-1}$ over $\F_p$, $i\geq 1$, which is applied in Corollary \ref{y} to explicitly exhibit the minimal 
polynomial of $c_1$ over $\F_p$.


\section{Construction of the towers}\label{dos}

Let us recall the following result due to Artin and Schreier.

\begin{theorem}\label{as} Let $F$ be a field of prime characteristic
$p$ and take $a\in F$. Then the polynomial $f=X^p-X-a\in F[X]$
is either irreducible or it has a root in $F$.
\end{theorem}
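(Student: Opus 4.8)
The plan is to analyze the roots of $f=X^p-X-a$ in a splitting field and exploit the rigid structure they are forced to have over $\F_p$. First I would fix a root $c$ of $f$ in some extension of $F$ and observe that, because the Frobenius map is additive and $j^p=j$ for every $j\in\F_p$, each element $c+j$ with $j\in\F_p$ also satisfies $(c+j)^p-(c+j)-a=0$. Since these $p$ elements are distinct and $f$ has degree $p$, the complete list of roots of $f$ is exactly $\{c+j:j\in\F_p\}$, an additive coset of the prime field.

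Next I would treat the reducible case. Suppose $f$ is not irreducible, and let $g\in F[X]$ be a monic irreducible factor of $f$ of degree $d$ with $1\le d<p$. Because $g$ divides $f$, its roots form a subset $\{c+j_1,\dots,c+j_d\}$ of the coset above, for some distinct $j_1,\dots,j_d\in\F_p$. The coefficient of $X^{d-1}$ in $g$ lies in $F$ and equals $-\sum_{k=1}^d (c+j_k)=-(dc+m)$, where $m=j_1+\cdots+j_d\in\F_p\subseteq F$.

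The decisive point is that $d$ is invertible in $F$: since $1\le d<p$, the integer $d$ is nonzero modulo $p$, hence a unit in the prime subfield $\F_p\subseteq F$. Solving the displayed relation for $c$ then expresses $c$ as an $F$-linear combination of the coefficient of $g$ and the element $m$, both of which lie in $F$. Therefore $c\in F$, and $f$ has a root in $F$, as required. I expect the only subtle step to be the invertibility observation, which is what forces the dichotomy: once any proper factor exists, its subleading coefficient pins down $c$ inside $F$, leaving irreducibility as the only alternative.
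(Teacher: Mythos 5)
Your argument is correct and is essentially the paper's own proof: both identify the full root set as the coset $\{c+j : j\in\F_p\}$, extract the sum of the roots of a proper irreducible factor from its subleading coefficient, and use the invertibility of its degree $d<p$ in characteristic $p$ to force $c\in F$. The only cosmetic difference is that you argue from ``$f$ reducible'' to ``$f$ has a root,'' while the paper takes the contrapositive, assuming no root in $F$ and concluding the minimal polynomial has degree $p$.
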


\begin{proof} Let $K$ be a splitting field for $f$ over
$F$, and let $\alpha\in K$ be a root of $f$. As $F$ has
characteristic $p$, we have
$$
(\alpha+1)^p-(\alpha+1)-1=(\alpha^p+1)-(\alpha+1)-1=\alpha^p-\alpha-1=0.
$$
It follows that the $p$ distinct elements
$\alpha,\alpha+1,...,\alpha+(p-1)$ of $K$ are roots of~$f$.
Suppose $f$ has no roots in $F$, so that $\alpha\notin F$. Let
$g\in F[X]$ be the minimal polynomial of $\alpha$ over $F$. The
sum of the roots of $g$ in $K$ belongs to $F$. As $g|f$, this sum  equals
$d\alpha+k$, where $d$ is the degree of $g$ and $k\in F$. If $d<p$
then $\alpha\in F$. As this cannot be, it follows that $d=p$, so
$g=f$ is irreducible.
\end{proof}


\begin{lemma}\label{truno} Let $F$ be a field of prime characteristic
$p$ and take $a\in F$. Suppose $f=X^p-X-a$ has no roots in $F$ and set $K=F[\alpha]$, where $f(\alpha)=0$.
Then the minimal and characteristic polynomial of $\alpha^{p-1}$ over $F$ is $X^p+(X-1)^{p-1}-a^{p-1}-1$.
In particular, $\mathrm{tr}_{K/F}(\alpha^{p-1})=-1$.
\end{lemma}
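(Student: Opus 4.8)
The plan is to produce, by a single substitution, a monic degree-$p$ polynomial over $F$ that kills $\beta:=\alpha^{p-1}$, and then to argue that this polynomial is forced to be both the minimal and the characteristic polynomial of $\beta$. Before doing so I would record two elementary facts. Since $f$ has no root in $F$, Theorem~\ref{as} gives that $f$ is irreducible, so $[K:F]=p$ and $\{1,\alpha,\dots,\alpha^{p-1}\}$ is an $F$-basis of $K$. Moreover $\alpha\neq 0$, since $\alpha=0$ would force $a=\alpha^p-\alpha=0$ and hence give $0$ as a root of $f$ in $F$; thus $\alpha$ is invertible in $K$.

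The crux is to invert the relation between $\alpha$ and $\beta$. From $\alpha^p=\alpha+a$ I obtain $\beta=\alpha^{p-1}=1+a\alpha^{-1}$, and since $\beta=1$ would force $a=0$, I may solve $\alpha=a/(\beta-1)$. Substituting this into $\alpha^p-\alpha-a=0$, clearing the denominator $(\beta-1)^p$, and dividing by the nonzero $a$, I get $a^{p-1}-(\beta-1)^{p-1}-(\beta-1)^p=0$. The characteristic-$p$ identity $(\beta-1)^p=\beta^p-1$ then turns this into $g(\beta)=0$, where $g=X^p+(X-1)^{p-1}-a^{p-1}-1\in F[X]$.

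It remains to identify $g$ with the minimal and characteristic polynomial of $\beta$. Here $g$ is monic of degree $p$, its leading term coming solely from $X^p$, so the minimal polynomial of $\beta$ has degree at most $p$; I claim it has degree exactly $p$. Indeed $\beta\notin F$, for otherwise $\alpha=a/(\beta-1)$ would lie in $F$, contradicting irreducibility. Hence $F[\beta]=K$ has degree $p$ over $F$, so the minimal polynomial of $\beta$ has degree $p$ and must equal the monic $g$. Since $\beta$ generates $K$ and $[K:F]=p$, the characteristic polynomial of multiplication by $\beta$ on $K$ also has degree $p$ and coincides with the minimal polynomial, so it too equals $g$. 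Finally, the coefficient of $X^{p-1}$ in $g$ is the leading coefficient $1$ of $(X-1)^{p-1}$, and negating it yields $\mathrm{tr}_{K/F}(\alpha^{p-1})=-1$.

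I expect the only real obstacle to be bookkeeping rather than genuine difficulty: one must verify that $a\neq 0$ and $\beta\neq 1$ so that the substitution $\alpha=a/(\beta-1)$ is legitimate, and apply $(\beta-1)^p=\beta^p-1$ correctly when simplifying. Once the substitution is in hand, every remaining step is routine.
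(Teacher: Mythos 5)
Your proof is correct and follows essentially the same route as the paper: you arrive at the same relation $\beta(\beta-1)^{p-1}=a^{p-1}$ (the paper gets it by raising $\alpha(\alpha^{p-1}-1)=a$ to the $(p-1)$-th power, you by substituting $\alpha=a/(\beta-1)$ into $f$ and clearing denominators), and you identify the resulting monic degree-$p$ polynomial as the minimal and characteristic polynomial via $F[\alpha^{p-1}]=F[\alpha]=K$, exactly as the paper does. Your extra care about $a\neq 0$ and $\beta\neq 1$ is sound but not a substantive departure.
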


\begin{proof} From $\alpha^{p}=\alpha+a$ we deduce
$
\alpha^{p-1}=1+\frac{a}{\alpha}.
$
It follows that $F[\alpha]=F[\alpha^{p-1}]$, so
$\alpha$ and $\alpha^{p-1}$ have the same degree
over $F$, namely~$p$ by Theorem \ref{as}. Moreover, $\alpha(\alpha^{p-1}-1)=a$ yields
$$
\alpha^{p-1}(\alpha^{p-1}-1)^{p-1}=a^{p-1},
$$
so the minimal polynomial and characteristic polynomial of $\alpha^{p-1}$ over $F$ is 
$$
X(X-1)^{p-1}-a^{p-1}=(X-1+1)(X-1)^{p-1}-a^{p-1}=X^p+(X-1)^{p-1}-a^{p-1}-1.
$$
\end{proof}

\begin{note} That $\mathrm{tr}_{K/F}(\alpha^{p-1})=-1$ can be seen directly by considering the map
multiplication by $\alpha^{p-1}$, which has the following matrix of trace $-1$ relative to the
basis $1,\alpha,...,\alpha^{p-1}$ of $K$ over $F$:
$$
\begin{bmatrix}
  0 & a & 0 & \cdots & 0 & 0\\
  0 & 1 & a & \cdots & 0  & 0\\
  0 & 0 & 1 & \cdots & 0 & 0\\
  \vdots & \vdots & \vdots & \vdots & \vdots & \vdots \\
  0 & 0 & 0 & \cdots & 1 & a\\
   1 & 0 & 0 & \cdots & 0 & 1\\
\end{bmatrix}.
$$
\end{note}


\begin{lemma}\label{tru} We have
$$ \mathrm{tr}_{L_i/L_{-1}}(a_i)=(-1)^{i+1}, \quad i\geq -1. $$
\end{lemma}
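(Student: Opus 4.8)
The plan is to proceed by induction on $i$, using transitivity of the trace together with the key computation already recorded in Lemma \ref{truno}. The starting observation is that the quantities $a_i$ satisfy a simple recursion: from the definition $a_i=(c_{-1}\cdots c_i)^{p-1}$ we immediately get
$$
a_i=(c_{-1}\cdots c_{i-1})^{p-1}\,c_i^{p-1}=a_{i-1}\,c_i^{p-1},\quad i\geq 0,
$$
where crucially $a_{i-1}\in L_{i-1}$ while $c_i^{p-1}$ lives in $L_i=L_{i-1}[c_i]$. This factorization is what lets us peel off one layer of the tower at a time.

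For the base case $i=-1$, we have $a_{-1}=c_{-1}^{p-1}=1$ and $\mathrm{tr}_{L_{-1}/L_{-1}}(1)=1=(-1)^{0}$, which is the claimed value. For the inductive step, assuming $\mathrm{tr}_{L_{i-1}/L_{-1}}(a_{i-1})=(-1)^{i}$, I would first compute the relative trace of $a_i$ down one step. Since $c_i$ is a root of $X^p-X-a_{i-1}$ and the tower is proper (so $[L_i:L_{i-1}]=p$ and $X^p-X-a_{i-1}$ has no root in $L_{i-1}$), Lemma \ref{truno} applies with $F=L_{i-1}$ and $\alpha=c_i$, giving $\mathrm{tr}_{L_i/L_{i-1}}(c_i^{p-1})=-1$. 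Because $a_{i-1}\in L_{i-1}$ is a scalar for this extension, it pulls out of the trace, so
$$
\mathrm{tr}_{L_i/L_{i-1}}(a_i)=\mathrm{tr}_{L_i/L_{i-1}}\!\bigl(a_{i-1}c_i^{p-1}\bigr)=a_{i-1}\,\mathrm{tr}_{L_i/L_{i-1}}(c_i^{p-1})=-a_{i-1}.
$$

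With this relative computation in hand, transitivity of the trace along $L_{-1}\subset L_{i-1}\subset L_i$ closes the induction:
$$
\mathrm{tr}_{L_i/L_{-1}}(a_i)=\mathrm{tr}_{L_{i-1}/L_{-1}}\!\bigl(\mathrm{tr}_{L_i/L_{i-1}}(a_i)\bigr)=\mathrm{tr}_{L_{i-1}/L_{-1}}(-a_{i-1})=-(-1)^{i}=(-1)^{i+1}.
$$
I do not anticipate a serious obstacle here: the argument is a clean two-line induction once the recursion $a_i=a_{i-1}c_i^{p-1}$ is observed and Lemma \ref{truno} is invoked. The only point requiring a word of care is checking that Lemma \ref{truno} is legitimately applicable at each level, i.e.\ that $X^p-X-a_{i-1}$ genuinely has no root in $L_{i-1}$ so that $c_i$ has degree $p$; but this is exactly the content of the tower being proper, which is part of the standing setup, so it needs only a brief mention rather than a real verification.
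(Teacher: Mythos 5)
Your induction is the same as the paper's: base case $i=-1$, the factorization $a_i=a_{i-1}c_i^{p-1}$, Lemma \ref{truno} to get $\mathrm{tr}_{L_i/L_{i-1}}(c_i^{p-1})=-1$, pulling the scalar $a_{i-1}$ out of the relative trace, and transitivity. That part is correct and matches the paper line for line.

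There is, however, one logical point you dismiss that the paper is careful about, and your dismissal is circular as written. To apply Lemma \ref{truno} at level $i$ you need $X^p-X-a_{i-1}$ to have no root in $L_{i-1}$, and you justify this by saying the tower being proper ``is part of the standing setup.'' It is not: in the paper, $[L_i:L_{i-1}]=p$ (equation (\ref{dim})) is \emph{deduced} from Theorem \ref{as} together with Lemma \ref{tru} itself, so you cannot invoke it as a prior fact while proving Lemma \ref{tru}. The correct move --- and the reason the paper interleaves the two claims in a single induction --- is to observe that the inductive hypothesis already supplies what you need: since $\mathrm{tr}_{L_{i-1}/L_{-1}}(x^p-x)=0$ for every $x\in L_{i-1}$, the hypothesis $\mathrm{tr}_{L_{i-1}/L_{-1}}(a_{i-1})=(-1)^{i}\neq 0$ shows $a_{i-1}$ is not of the form $x^p-x$, hence $X^p-X-a_{i-1}$ has no root in $L_{i-1}$ and is irreducible by Theorem \ref{as}. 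Inserting that one sentence makes your proof complete and non-circular; without it, the argument rests on a fact whose only proof in the paper is the very induction you are carrying out.
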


\begin{proof} By induction on $i$. The base case $i=-1$ is trivially true. Suppose that for some $i\geq -1$, we have
\begin{equation}
\label{indh}
\mathrm{tr}_{L_i/L_{-1}}(a_i)=(-1)^{i+1}.
\end{equation}
Since $\mathrm{tr}_{L_i/L_{-1}}(x^p-x)=0$ for all $x\in L_i$, Theorem \ref{as} yields
$$
[L_{i+1}:L_i]=p.
$$
On the other hand, by Lemma \ref{truno}, we have
\begin{equation}
\label{nab}
\mathrm{tr}_{L_{i+1}/L_{i}}(c_{i+1}^{p-1})=-1.
\end{equation}
It follows from the transitivity of the trace, (\ref{indh}), and (\ref{nab}) that
$$
\begin{aligned}
\mathrm{tr}_{L_{i+1}/L_{-1}}(a_{i+1})&=
\mathrm{tr}_{L_{i}/L_{-1}}(\mathrm{tr}_{L_{i+1}/L_{i}}(a_{i+1}))\\
&=
\mathrm{tr}_{L_{i}/L_{-1}}(a_{i}\,\mathrm{tr}_{L_{i+1}/L_{i}}(c_{i+1}^{p-1}))\\&
= \mathrm{tr}_{L_{i}/L_{-1}}(-a_{i})=(-1)^{i+2}.
\end{aligned}
$$
This proves Lemma \ref{tru}. 
\end{proof}

Theorem \ref{as} and Lemma \ref{tru} imply that 
\begin{equation}
\label{dim}
[L_{i+1}:L_{i}]=p, \quad i\geq -1,
\end{equation}
which gives
\begin{equation}
\label{size}
|L_i|=p^{p^{i+1}},\quad i\geq -1.
\end{equation}

\begin{lemma}\label{use} The following identity holds:
$$ c_i^{p^{p^{i}}}=c_i+(-1)^{i},\quad i\geq 0. $$
\end{lemma}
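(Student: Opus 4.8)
The plan is to exploit the defining relation $c_i^p-c_i-a_{i-1}=0$, that is $c_i^p=c_i+a_{i-1}$, and to iterate the Frobenius map $x\mapsto x^p$. First I would establish, by induction on $k\geq 1$, the identity
$$
c_i^{p^k}=c_i+\sum_{j=0}^{k-1}a_{i-1}^{p^j}.
$$
The base case $k=1$ is exactly the defining relation. For the inductive step I would raise both sides to the $p$-th power and use that the Frobenius is additive in characteristic $p$ together with $(c_i^{p^k})^p=c_i^{p^{k+1}}$; the effect is simply to append the new term $a_{i-1}^{p^k}$ at the top of the sum.

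Next I would specialize to $k=p^i$. By (\ref{size}) we have $|L_{i-1}|=p^{p^i}$, so $[L_{i-1}:\F_p]=p^i$, and since $a_{i-1}\in L_{i-1}$ the partial orbit sum becomes the complete trace down to the prime field:
$$
\sum_{j=0}^{p^i-1}a_{i-1}^{p^j}=\mathrm{tr}_{L_{i-1}/\F_p}(a_{i-1}).
$$

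Finally I would invoke Lemma \ref{tru} with index $i-1$, which is legitimate because $i\geq 0$ gives $i-1\geq -1$; it yields $\mathrm{tr}_{L_{i-1}/L_{-1}}(a_{i-1})=(-1)^{(i-1)+1}=(-1)^{i}$, and since $L_{-1}=\F_p$ this is precisely the required constant. Combining the three displays gives $c_i^{p^{p^i}}=c_i+(-1)^i$, as desired. The argument is essentially free of genuine obstacles; the only point demanding care is the index bookkeeping—matching the exponent $p^i$ to the degree $[L_{i-1}:\F_p]$ and shifting the index in Lemma \ref{tru} from $i$ to $i-1$—so that the telescoped Frobenius sum is correctly recognized as exactly the trace that the earlier lemma evaluates.
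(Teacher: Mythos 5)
Your proof is correct and follows essentially the same route as the paper: the paper writes the telescoping chain $c_i^{p^{j+1}}-c_i^{p^j}=a_{i-1}^{p^j}$ for $0\leq j\leq p^i-1$, sums it, and identifies the result as $\mathrm{tr}_{L_{i-1}/L_{-1}}(a_{i-1})=(-1)^i$ via Lemma \ref{tru}, which is exactly your induction-then-trace argument in slightly different packaging.
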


\begin{proof} Consider the equations
$$ 
c_i^p-c_i=a_{i-1},
$$
$$
c_i^{p^2}-c_i^p=a_{i-1}^p,
$$
$$
\vdots
$$
$$
c_i^{p^{p^{i}}}-c_i^{p^{p^{i}-1}}=a_{i-1}^{p^{p^{i}-1}}.
$$
Adding them up and appealing to Lemma \ref{tru}, 
we obtain
\[
c_i^{p^{p^i}}-c_i=\mathrm{tr}_{L_{i-1}/L_{-1}}(a_{i-1})=(-1)^{i}.\qedhere
\]
\end{proof}

It should be noted that the union, say $L$, of the members of the tower
$$
L_{-1}\subset L_0\subset L_1\subset\dots
$$
is the smallest subfield of $E$ that has no extensions of degree $p$. By construction, a basis for $L$ over $\F_p$
is formed by 1 and all elements of the form $c_{n_1}^{e_1}\cdots c_{n_k}^{e_k}$, where $k\geq 1$, $0\leq n_1<\dots<n_k$
and $1\leq e_1,\dots,e_k<p$. We can also construct $L$ as the quotient of the polynomial ring $\F_p[X_{-1},X_0,X_1,\dots]$ by the ideal $I$ generated by 
\begin{equation}\label{ideal}
X_{-1}-1\text{ and }X_{i+1}^p-X_{i+1}-(X_{-1}\cdots X_i)^{p-1},\quad i\geq -1.
\end{equation}
Indeed, there is a ring epimorphism $\F_p[X_{-1},X_0,X_1,\dots]\to L$  mapping $X_j\to c_j$ and containing (\ref{ideal}) in its kernel.
The resulting epimorphism from the quotient ring $\F_p[X_{-1},X_0,X_1,X_2,\dots]/I$ to $L$ is seen to be injective by means of the preceding basis of $L$.

\section{Basic properties}\label{secbp}

In this section we will establish some basic properties of the sequence of numbers $N_i$, $i \ge 0$, for a fixed prime $p$.  These properties were noticed earlier by Popovych \cite{P} in the case $p=2$; for odd primes the proofs are similar.

Recall that
$$
N_i=\frac{p^{p^{i+1}}-1}{p^{p^i}-1},\quad i\geq 0,
$$
so
\begin{equation}\label{prod}
\frac{p^{p^{i+1}}-1}{p-1}=N_0\cdots N_i.
\end{equation}
We can interpret $N_i$ in terms of the norm map $L_i\to  L_{i-1}$, which, according to~(\ref{size}), is given by
$$
x\mapsto x^{1+p^{p^i}+p^{2p^i}+\cdots+p^{(p-1)p^i}}=x^{N_i}.
$$

\begin{lemma}\label{relpri} The numbers $N_i$ and $p^{p^i}-1$ are relatively prime.
\end{lemma}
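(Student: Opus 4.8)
The plan is to set $q=p^{p^i}$ and exploit the cyclotomic-type factorization of $N_i$. With this notation, $p^{p^{i+1}}=(p^{p^i})^p=q^p$, so the definition of $N_i$ becomes
$$
N_i=\frac{q^p-1}{q-1}=1+q+q^2+\cdots+q^{p-1},
$$
and what we must show is that $\gcd(N_i,q-1)=1$, since $q-1=p^{p^i}-1$.

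First I would reduce $N_i$ modulo $q-1$. Because $q\equiv 1\pmod{q-1}$, each power $q^k\equiv 1$, and summing the $p$ terms gives $N_i\equiv p\pmod{q-1}$. Consequently
$$
\gcd(N_i,\,q-1)=\gcd(p,\,q-1).
$$
This is the one small computation, and it is completely routine.

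Next I would compute $q-1$ modulo $p$. Since $p\mid p^{p^i}$, we have $q=p^{p^i}\equiv 0\pmod p$, hence $q-1\equiv -1\pmod p$, so $p\nmid q-1$. As $p$ is prime, this forces $\gcd(p,q-1)=1$, and combining with the previous display yields $\gcd(N_i,p^{p^i}-1)=1$, as desired.

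There is essentially no obstacle here: the statement is elementary once one recognizes $N_i$ as a geometric sum $\sum_{k=0}^{p-1}q^k$ in the single variable $q=p^{p^i}$. The only point requiring any care is the first congruence $N_i\equiv p\pmod{q-1}$, which is where the primality of $p$ enters, and everything else is a one-line verification.
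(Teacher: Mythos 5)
Your proof is correct and follows essentially the same route as the paper: both reduce $N_i$ modulo $p^{p^i}-1$ (the paper works modulo a hypothetical common prime divisor $q$) to the residue $p$, and then observe that $p$ is coprime to $p^{p^i}-1$. One tiny quibble: the primality of $p$ enters in the final step $\gcd(p,q-1)=1$, not in the congruence $N_i\equiv p\pmod{q-1}$, which is just a geometric sum with $p$ terms and holds for any exponent.
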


\begin{proof} Suppose, if possible, that $q$ is a prime factor of
both $N_i$ and $p^{p^i}-1$. Then
$p^{p^{i}}\equiv 1\mod q$ and therefore
$$
0\equiv N_i \equiv 1+p^{p^{i}}+\cdots+p^{p^i(p-1)}\equiv p\mod q,
$$
which implies $0\equiv 1\mod q$, a contradiction.
\end{proof}

\begin{cor}\label{correlpri} Let $d$ be a positive divisor of $N_i$ different from 1. 
Then the order of $p$ modulo $d$ is precisely $p^{i+1}$. Thus, $p^{i+1}|\varphi(d)$. In particular, 
$p^{i+1}|\varphi(N_i)$, and if $q$ is any prime divisor
of $N_i$, then $q\equiv 1\mod p^{i+1}$, so that $d\equiv 1\mod p^{i+1}$.
\end{cor}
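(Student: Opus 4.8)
The plan is to pin down the multiplicative order of $p$ modulo $d$ by squeezing it between two divisibility constraints, after which every remaining assertion drops out formally. From the definition $N_i=(p^{p^{i+1}}-1)/(p^{p^i}-1)$ we have $N_i\mid p^{p^{i+1}}-1$, so any divisor $d$ of $N_i$ satisfies $p^{p^{i+1}}\equiv 1\mod d$. Hence the order of $p$ modulo $d$ divides $p^{i+1}$, and being a divisor of a prime power it equals $p^k$ for some $0\leq k\leq i+1$. The entire problem thus reduces to excluding the possibility $k\leq i$.

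First I would rule out $k\leq i$ by contradiction. If $k\leq i$, then $p^k\mid p^i$, so the order $p^k$ of $p$ modulo $d$ divides $p^i$, which gives $p^{p^i}\equiv 1\mod d$, i.e. $d\mid p^{p^i}-1$. Together with $d\mid N_i$ this forces $d$ to divide $\gcd(N_i,\,p^{p^i}-1)$, and this gcd is $1$ by Lemma \ref{relpri}. Since $d\neq 1$, this is a contradiction, so necessarily $k=i+1$ and the order of $p$ modulo $d$ is exactly $p^{i+1}$. This is the only step with genuine content: the whole statement rests on the coprimality supplied by Lemma \ref{relpri}, so the main (and essentially sole) obstacle is recognizing that the hypothetical smaller order would collide precisely with that coprimality.

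The remaining claims follow without further work. Because the multiplicative order of $p$ modulo $d$ always divides $\varphi(d)$, we get $p^{i+1}\mid\varphi(d)$; applying this with $d=N_i$ (note $N_i=1+p^{p^i}+\cdots+p^{(p-1)p^i}>1$) yields $p^{i+1}\mid\varphi(N_i)$. For a prime divisor $q$ of $N_i$ I would take $d=q$ and use $\varphi(q)=q-1$ to obtain $p^{i+1}\mid q-1$, that is $q\equiv 1\mod p^{i+1}$. Finally, every prime factor of $d$ is in particular a prime factor of $N_i$ and so is congruent to $1$ modulo $p^{i+1}$; since a product of integers each $\equiv 1\mod p^{i+1}$ is again $\equiv 1\mod p^{i+1}$, the factorization of $d$ gives $d\equiv 1\mod p^{i+1}$, completing the proof.
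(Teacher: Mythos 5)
Your proof is correct and follows essentially the same route as the paper: bound the order of $p$ modulo $d$ above by $p^{i+1}$ using $d\mid N_i\mid p^{p^{i+1}}-1$, then rule out any proper divisor of $p^{i+1}$ via Lemma \ref{relpri}, which is exactly the paper's argument. The additional deductions you spell out (order divides $\varphi(d)$, the case $d=q$ prime, and the product of primes $\equiv 1 \bmod p^{i+1}$) are the intended, standard completions that the paper leaves implicit.
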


\begin{proof} Since $d|N_i$, we have $p^{p^{i+1}}\equiv 1\mod d$.
Therefore the order of $p$ modulo $d$ is a factor of $p^{i+1}$. This order
cannot be a proper divisor of $p^{i+1}$, for otherwise
$p^{p^{i}}\equiv 1\mod d$, which contradicts Lemma \ref{relpri}.
\end{proof}

\begin{cor}\label{correlpri2} If $i\neq j$, then $N_i$ and $N_j$ are relatively prime.
\end{cor}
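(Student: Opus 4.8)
The plan is to prove this by contradiction, using the multiplicative order of $p$ as an invariant that distinguishes the prime factors of the various $N_i$. Without loss of generality I would assume $i<j$, and suppose for contradiction that $N_i$ and $N_j$ are not relatively prime, so that they share a common prime divisor $q$. Before invoking the order of $p$ modulo $q$, I would first record that $q\neq p$: reducing $N_i=1+p^{p^i}+\cdots+p^{(p-1)p^i}$ modulo $p$ gives $N_i\equiv 1\pmod p$, so $p\nmid N_i$, and hence the multiplicative order of $p$ modulo $q$ is well defined.

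The main step is then a direct application of Corollary \ref{correlpri}. Since $q$ is a prime divisor of $N_i$, that corollary gives that the order of $p$ modulo $q$ is exactly $p^{i+1}$. But $q$ is equally a prime divisor of $N_j$, so the same corollary forces the order of $p$ modulo $q$ to be exactly $p^{j+1}$. As $i\neq j$, the powers $p^{i+1}$ and $p^{j+1}$ are distinct, contradicting the fact that the multiplicative order of $p$ modulo $q$ is a single well-defined integer. Therefore no common prime divisor $q$ can exist, and $N_i$ and $N_j$ are relatively prime.

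I do not expect any genuine obstacle here, since Corollary \ref{correlpri} already does the essential work of pinning down the order of $p$ modulo every prime factor of $N_i$. The only point requiring care is the preliminary observation that a shared prime $q$ cannot equal $p$ itself, which guarantees that the phrase ``the order of $p$ modulo $q$'' is meaningful; once that is in place, the uniqueness of the multiplicative order delivers the contradiction immediately.
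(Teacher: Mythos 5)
Your proof is correct, but it takes a different route from the paper's. The paper argues directly by divisibility: assuming $i>j$, the number $N_j$ divides $p^{p^{j+1}}-1$, which in turn divides $p^{p^i}-1$, and Lemma \ref{relpri} says $N_i$ is coprime to $p^{p^i}-1$; hence $\gcd(N_i,N_j)=1$ in one line. You instead use Corollary \ref{correlpri} as an invariant: every prime divisor $q$ of $N_k$ has the order of $p$ modulo $q$ equal to exactly $p^{k+1}$, so a common prime divisor of $N_i$ and $N_j$ would force $p^{i+1}=p^{j+1}$, a contradiction. Both arguments ultimately rest on Lemma \ref{relpri} (since Corollary \ref{correlpri} is deduced from it), so neither is more general, but your version makes the conceptual reason for pairwise coprimality transparent --- the primes dividing distinct $N_k$ are separated by the order of $p$ --- whereas the paper's is shorter and avoids any discussion of orders. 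Your preliminary remark that $q\neq p$ (via $N_i\equiv 1\bmod p$) is a reasonable piece of care, though strictly speaking it is already implicit in Corollary \ref{correlpri}, whose proof starts from $p^{p^{i+1}}\equiv 1\bmod d$ and so already guarantees that $p$ is invertible modulo any divisor $d>1$ of $N_i$.
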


\begin{proof} Suppose $i>j$. By Lemma \ref{relpri}, $N_i$ is relatively prime to $p^{p^i}-1$
and hence to its factor $p^{p^{j+1}}-1$ and therefore to its divisor $N_j$.
\end{proof}

\begin{lemma}\label{ac} For $i\geq 0$, we have $c_i^{N_i}=a_{i-1}$.
\end{lemma}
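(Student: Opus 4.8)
The plan is to compute the norm of $c_i$ from $L_i$ down to $L_{i-1}$ in two different ways and compare. The excerpt already records that the norm map $L_i\to L_{i-1}$ is the map $x\mapsto x^{N_i}$, so $\mathrm{N}_{L_i/L_{i-1}}(c_i)=c_i^{N_i}$. It therefore suffices to show that this norm equals $a_{i-1}$ by evaluating it as a product of Galois conjugates.

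First I would recall that, since $L_i/L_{i-1}$ is an extension of finite fields of degree $p$ by (\ref{dim}), it is a Galois extension, and $\mathrm{N}_{L_i/L_{i-1}}(c_i)$ is the product of the $\mathrm{Gal}(L_i/L_{i-1})$-conjugates of $c_i$. By construction $c_i$ is a root of $X^p-X-a_{i-1}$, which is irreducible over $L_{i-1}$ by Theorem~\ref{as} together with $[L_i:L_{i-1}]=p$; hence its conjugates are exactly the full set of roots of this polynomial. As shown in the proof of Theorem~\ref{as}, these roots are $c_i,c_i+1,\dots,c_i+(p-1)$.

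It then remains to evaluate the product of the conjugates. Using the identity $\prod_{k=0}^{p-1}(X+k)=X^p-X$ in characteristic $p$ (as $k$ ranges over a complete set of residues, so does $-k$), we obtain
$$
\mathrm{N}_{L_i/L_{i-1}}(c_i)=\prod_{k=0}^{p-1}(c_i+k)=c_i^p-c_i=a_{i-1},
$$
the last equality being the defining relation for $c_i$. Comparing with $\mathrm{N}_{L_i/L_{i-1}}(c_i)=c_i^{N_i}$ yields $c_i^{N_i}=a_{i-1}$, as desired.

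I do not expect any serious obstacle here; the only point requiring care is the identification of the conjugates of $c_i$ with the $p$ roots $c_i+k$, which rests on the Artin--Schreier structure already established in Theorem~\ref{as} and on the fact that degree-$p$ extensions of finite fields are Galois. An alternative, purely computational route would avoid Galois theory by reading off $\mathrm{N}_{L_i/L_{i-1}}(c_i)=(-1)^p\cdot(-a_{i-1})$ from the constant term of the minimal polynomial $X^p-X-a_{i-1}$, which simplifies to $a_{i-1}$ in both the odd and even characteristic cases.
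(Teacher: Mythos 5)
Your proposal is correct and matches the paper's argument in substance: both identify $c_i^{N_i}$ with $\mathrm{N}_{L_i/L_{i-1}}(c_i)$ and evaluate that norm from the minimal polynomial $X^p-X-a_{i-1}$; the paper reads it off the constant term in one line, which is exactly the alternative you mention at the end. Your main route via the product of conjugates $\prod_{k=0}^{p-1}(c_i+k)=c_i^p-c_i$ is just a more explicit rendering of the same computation.
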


\begin{proof} The minimal and characteristic polynomials of $c_i$ over $L_{i-1}$ are equal
to $X^p-X-a_{i-1}$, so $a_{i-1}=N_{L_i/L_{i-1}}(c_i)=c_i^{N_i}$.
\end{proof}

\begin{lemma}\label{ac2} For $i\geq 0$, we have $c_i^{N_0\cdots N_i}=1=a_i^{N_0\cdots N_i}$.
\end{lemma}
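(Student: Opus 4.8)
The plan is to prove the two assertions $c_i^{N_0\cdots N_i}=1$ and $a_i^{N_0\cdots N_i}=1$ together, by a single induction on $i$. The two tools I would use are Lemma \ref{ac}, which supplies $c_i^{N_i}=a_{i-1}$, and the multiplicative recursion $a_i=a_{i-1}\,c_i^{p-1}$, which is immediate from $a_i=(c_{-1}\cdots c_i)^{p-1}$. These let me pass back and forth between the $c$'s and the $a$'s cheaply, so that each assertion feeds the other up the tower.

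For the base case $i=0$, note that $c_{-1}=1$ forces $a_{-1}=1$, so Lemma \ref{ac} gives $c_0^{N_0}=a_{-1}=1$; since $a_0=c_0^{p-1}$, raising to the power $p-1$ yields $a_0^{N_0}=\bigl(c_0^{N_0}\bigr)^{p-1}=1$ as well. For the inductive step at $i\geq 1$, assume both assertions hold at index $i-1$. I would first handle $c_i$: by Lemma \ref{ac},
$$
c_i^{N_0\cdots N_i}=\bigl(c_i^{N_i}\bigr)^{N_0\cdots N_{i-1}}=a_{i-1}^{N_0\cdots N_{i-1}}=1,
$$
using the hypothesis on $a_{i-1}$. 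I would then handle $a_i$ via the recursion, writing $a_i^{N_0\cdots N_i}=a_{i-1}^{N_0\cdots N_i}\cdot\bigl(c_i^{N_0\cdots N_i}\bigr)^{p-1}$; here the second factor is $1$ by the line just proved, and the first equals $\bigl(a_{i-1}^{N_0\cdots N_{i-1}}\bigr)^{N_i}=1$ by the inductive hypothesis on $a_{i-1}$.

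I do not expect any genuine obstacle, since both ingredients are elementary. The only point requiring care is the internal ordering of the inductive step: the assertion for $c_i$ must be established before that for $a_i$, as the latter consumes the former. Keeping the exponent bookkeeping straight---factoring $N_0\cdots N_i$ as $(N_0\cdots N_{i-1})N_i$ at the right moments---is the entirety of the work.
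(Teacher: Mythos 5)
Your proof is correct, and the handling of the $c_i$ assertion (base case $c_0^{N_0}=a_{-1}=1$, then $c_i^{N_0\cdots N_i}=(c_i^{N_i})^{N_0\cdots N_{i-1}}=a_{i-1}^{N_0\cdots N_{i-1}}$ via Lemma \ref{ac}) is essentially what the paper does. Where you diverge is the $a_i$ assertion: you obtain it inductively from the recursion $a_i=a_{i-1}c_i^{p-1}$, folding it into a joint induction with the $c_i$ statement, whereas the paper proves it directly with no induction at all --- since $c_{-1}\cdots c_i\in L_i^\times$ and $|L_i^\times|=p^{p^{i+1}}-1=(p-1)N_0\cdots N_i$, the element $a_i=(c_{-1}\cdots c_i)^{p-1}$ satisfies $a_i^{N_0\cdots N_i}=(c_{-1}\cdots c_i)^{p^{p^{i+1}}-1}=1$ immediately, and this is then fed into the $c$ part. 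The paper's route is a little shorter and makes transparent that the $a_i$ claim is just Lagrange's theorem in $L_i^\times$; your route stays entirely at the level of Lemma \ref{ac} and the multiplicative recursion, at the cost of the extra bookkeeping you describe. Both are valid, and your ordering of the two halves within the inductive step is handled correctly.
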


\begin{proof} Since $c_{-1}\dots c_i\in L_i$, (\ref{size}) gives
$
(c_{-1}\dots c_i)^{p^{p^{i+1}}-1}=1.
$
But
$N_0\cdots N_i=\frac{p^{p^{i+1}}-1}{p-1}$
and 
$a_i=(c_{-1}\dots c_i)^{p-1}$,
so $a_i^{N_0\cdots N_i}=1$. Lemma \ref{ac} yields $c_0^{N_0}=a_{-1}=1$ as well as
$
c_{i+1}^{N_0\cdots N_iN_{i+1}}=a_i^{N_0\cdots N_i}=1$ for all $i\geq 0$.
\end{proof}

When $p=2$ a sharpening of Lemma \ref{ac} is possible. The next result follows from the fact that finite subgroups of the multiplicative group of a field are cyclic.

\begin{lemma}\label{order} Let $K_1 \subset K_2$ be an extension of fields. Suppose $a\in K_2^\times$ has finite order
and set $m=\min\{n\geq 1\,|\, a^n\in K_1^\times\}$.
Then $O(a)=m\times O(a^m)$.
\end{lemma}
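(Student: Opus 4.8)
The plan is to reduce everything to elementary cyclic group theory applied to the finite cyclic group $\langle a\rangle$ generated by $a$ inside $K_2^\times$. Throughout I would write $N=O(a)$ for the order of $a$, which is finite by hypothesis.

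First I would examine the set $S=\{n\in\Z : a^n\in K_1^\times\}$. Since $K_1^\times$ is a subgroup of $K_2^\times$, it is immediate that $a^n,a^{n'}\in K_1^\times$ imply $a^{n+n'},a^{-n}\in K_1^\times$, so $S$ is a subgroup of $\Z$. The defining property of $m$ is precisely that $m$ is the smallest positive element of $S$, whence $S=m\Z$. An equivalent way to record this, matching the remark preceding the statement, is that the finite subgroup $\langle a\rangle\cap K_1^\times$ of $K_1^\times$ is exactly $\langle a^m\rangle$, and is therefore cyclic of order $O(a^m)$.

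The step I expect to carry the weight of the argument is the divisibility $m\mid N$. This follows at once from the observation that $a^N=1\in K_1^\times$ forces $N\in S=m\Z$. Once this is in hand the conclusion is routine: because $\langle a\rangle$ is cyclic of order $N$ and $m\mid N$, the element $a^m$ has order $N/\gcd(N,m)=N/m$. Hence $O(a)=N=m\cdot(N/m)=m\cdot O(a^m)$, as required. The only subtlety to keep in mind is that $O(a^m)$ computed in $K_1^\times$ agrees with its value in $K_2^\times$, which is clear since the order of an element does not depend on the ambient overfield.
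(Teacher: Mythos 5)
Your proof is correct. The paper never writes the argument out --- it only remarks beforehand that the lemma ``follows from the fact that finite subgroups of the multiplicative group of a field are cyclic'' --- so your write-up necessarily supplies the details, and it does so along the natural lines: $S=\{n\in\Z : a^n\in K_1^\times\}$ is a subgroup of $\Z$, hence $S=m\Z$; since $a^{O(a)}=1\in K_1^\times$ we get $m\mid O(a)$; and then $O(a^m)=O(a)/\gcd(O(a),m)=O(a)/m$ inside the cyclic group $\langle a\rangle$. This is presumably the computation the paper's remark is gesturing at, with $\langle a\rangle\cap K_1^\times=\langle a^m\rangle$ as the relevant cyclic subgroup, but your version makes clear that the cited fact about fields is not actually needed: nothing is used beyond $K_1^\times$ being a subgroup of $K_2^\times$ and $\langle a\rangle$ being cyclic by construction, so the statement and your proof hold verbatim for any subgroup $H$ of any group and any element $a$ of finite order with $m=\min\{n\geq 1 : a^n\in H\}$. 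The step carrying the weight is, as you identify, the divisibility $m\mid O(a)$, and you justify it correctly.
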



\begin{lemma}\label{ac3} Suppose $p=2$. Then $c_0=a_0$ has order $N_0=3$, $O(c_1)=N_0N_1=15$, $O(a_1)=N_1=5$,
and for $i\geq 2$, we have $c_i^{N_1\cdots N_i}=1=a_i^{N_1\cdots N_i}$.
\end{lemma}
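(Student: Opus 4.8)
The plan is to treat the four assertions separately, reducing each to two identities already at our disposal: Lemma \ref{ac}, which gives $c_i^{N_i}=a_{i-1}$, and the relation $a_i=a_{i-1}c_i$, which holds because for $p=2$ we have $a_i=(c_{-1}\cdots c_i)^{p-1}=c_{-1}\cdots c_i$. Multiplying these yields the workhorse identity $a_i=c_i^{N_i+1}$.

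First I would dispose of the low-index cases. Since $a_{-1}=1$, the element $c_0$ is a root of $X^2+X+1$ and hence a primitive cube root of unity, so $c_0=c_{-1}c_0=a_0$ and $O(c_0)=3=N_0$. For $O(c_1)$ I would invoke Lemma \ref{order} on the extension $L_0\subset L_1$ with $a=c_1$: the exponents $n$ with $c_1^n\in L_0^\times$ form a subgroup of $\Z$, and since $c_1^{N_1}=a_0=c_0\in L_0^\times$ by Lemma \ref{ac}, its positive generator $m$ divides $N_1=5$; as $c_1\notin L_0$ rules out $m=1$, we get $m=5$ and thus $O(c_1)=5\cdot O(c_1^5)=5\cdot O(c_0)=15$. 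Finally, writing $a_1=c_1^{N_1+1}=c_1^{6}$ and using $O(c_1)=15$ gives $O(a_1)=15/\gcd(6,15)=5=N_1$.

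For the tail $i\geq 2$ the crucial point is the telescoping identity $c_i^{N_1\cdots N_i}=(c_i^{N_i})^{N_1\cdots N_{i-1}}=a_{i-1}^{N_1\cdots N_{i-1}}$, which transfers the vanishing of a suitable power of $a_{i-1}$ to $c_i$. I would prove by induction on $i\geq 1$ that $a_i^{N_1\cdots N_i}=1$, the base case $i=1$ being exactly $O(a_1)=N_1$ from the previous step. In the inductive step the telescoping identity gives $c_i^{N_1\cdots N_i}=a_{i-1}^{N_1\cdots N_{i-1}}=1$, and then $a_i^{N_1\cdots N_i}=(c_i^{N_i+1})^{N_1\cdots N_i}=(c_i^{N_1\cdots N_i})^{N_i+1}=1$. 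This simultaneously delivers $c_i^{N_1\cdots N_i}=1$ for every $i\geq 2$, as wanted.

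No single step is genuinely difficult once the identities $c_i^{N_i}=a_{i-1}$ and $a_i=c_i^{N_i+1}$ are in place; the argument is essentially bookkeeping with the pairwise coprime $N_j$. The one conceptual point worth isolating is why the factor $N_0=3$ disappears: it persists in $O(c_0)=3$ and $O(c_1)=15$, but is already absent from $a_1$ (indeed $a_1=c_0c_1$ and $a_1^5=c_0^6=1$), and the telescoping identity then propagates this vanishing up the tower. Making that propagation precise, rather than any one computation, is the part I would state most carefully.
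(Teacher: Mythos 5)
Your proof is correct and follows essentially the same route as the paper: the low-index cases via Lemma \ref{ac} and Lemma \ref{order}, then a telescoping induction using $c_i^{N_i}=a_{i-1}$ and $a_i=a_{i-1}c_i$. The only differences are cosmetic (you compute $O(a_1)$ as $O(c_1^6)=15/\gcd(6,15)$ rather than checking $a_1^5=1$ and $a_1\notin L_0$ directly, and you anchor the induction at $i=1$ instead of $i=2$).
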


\begin{proof} By Lemma \ref{ac}, $c_0^3=1$. But $c_0^2=c_0+1$, so $c_0\neq 1$. Thus $O(c_0)=O(a_0)=3$.
By (\ref{dim}), $[L_1:L_0]=2$. In particular, $c_1\notin L_0$. On the other hand, Lemma \ref{ac} gives $c_1^5=a_0=c_0\in L_0$.
It follows from Lemma \ref{order} that $O(c_1)=15$. But $a_1=c_1c_0\notin L_0$ and
$a_1^5=c_1^5c_0^5=c_0c_0^2=1$, so $O(a_1)=5$.

We next show that $c_i^{N_1\cdots N_i}=1=a_i^{N_1\cdots N_i}$ for $i\geq 2$. Indeed, by Lemma \ref{ac}, we have
$c_2^{N_1N_2}=a_1^{N_1}=1$, so $a_2^{N_1N_2}=c_2^{N_2N_1}a_1^{N_2N_1}=1$. Suppose that for some $i\geq 2$, we have
$c_i^{N_1\cdots N_i}=1=a_i^{N_1\cdots N_i}$. Then by Lemma \ref{ac},
$$
c_{i+1}^{N_1\cdots N_iN_{i+1}}=a_i^{N_1\cdots N_i}=1,
$$
which implies
\[
a_{i+1}^{N_1\cdots N_iN_{i+1}}=c_{i+1}^{N_1\cdots N_iN_{i+1}} a_i^{N_1\cdots N_iN_{i+1}}=1.\qedhere
\]
\end{proof}

\section{The orders of $c_i$ and $a_i$}\label{main}

\begin{lemma}\label{pripri} For $i\geq 0$, we have $\gcd(N_i-(p^{p^i}-1), N_0\cdots N_i)=1$.
\end{lemma}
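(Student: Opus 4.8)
The plan is to reduce the single gcd condition to a family of pairwise gcd conditions, one for each factor $N_j$, and then dispatch each of these using the coprimality results already established. Writing $M_i=N_i-(p^{p^i}-1)$, I first observe that $\gcd(M_i,N_0\cdots N_i)=1$ holds if and only if $\gcd(M_i,N_j)=1$ for every $0\le j\le i$: any prime dividing the product $N_0\cdots N_i$ must divide one of the factors $N_j$, so a nontrivial common divisor of $M_i$ and the product would force a nontrivial common divisor of $M_i$ and some $N_j$. This reduces the problem to handling each index $j$ in turn, and the argument is uniform in $i$.

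For $j=i$, I would reduce $M_i$ modulo $N_i$. Since $M_i=N_i-(p^{p^i}-1)$, we have $M_i\equiv -(p^{p^i}-1)\pmod{N_i}$, so $\gcd(M_i,N_i)=\gcd(p^{p^i}-1,N_i)$. By Lemma \ref{relpri} the numbers $N_i$ and $p^{p^i}-1$ are relatively prime, which settles this case.

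For $j<i$, the key point is that $N_j$ divides $p^{p^i}-1$. Indeed $N_j$ divides $p^{p^{j+1}}-1$ by definition, and since $j+1\le i$ we have $p^{j+1}\mid p^i$, whence $p^{p^{j+1}}-1$ divides $p^{p^i}-1$; combining these gives $N_j\mid p^{p^i}-1$. Reducing $M_i$ modulo $N_j$ therefore kills the term $p^{p^i}-1$, leaving $M_i\equiv N_i\pmod{N_j}$, so that $\gcd(M_i,N_j)=\gcd(N_i,N_j)$. By Corollary \ref{correlpri2} distinct $N$'s are relatively prime, so $\gcd(N_i,N_j)=1$, completing this case.

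The argument is short, and the main thing to get right is the bookkeeping of divisibilities in the case $j<i$, in particular the elementary fact that $p^{m}-1\mid p^{n}-1$ whenever $m\mid n$, applied with $m=p^{j+1}$ and $n=p^i$. Once the reduction to the individual factors $N_j$ is in place, there is no real obstacle: each case collapses to one of the coprimality statements (Lemma \ref{relpri} or Corollary \ref{correlpri2}) proved earlier.
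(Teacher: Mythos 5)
Your proof is correct and follows essentially the same route as the paper's: a common prime divisor of $N_i-(p^{p^i}-1)$ and the product must divide some $N_j$, and the cases $j<i$ and $j=i$ are dispatched via the divisibility $N_j\mid p^{p^{j+1}}-1\mid p^{p^i}-1$ together with Corollary \ref{correlpri2}, and via Lemma \ref{relpri}, respectively. The only difference is presentational (gcd reductions versus tracking a hypothetical common prime factor), so there is nothing to change.
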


\begin{proof} Suppose, if possible,
that $q$ is a prime factor of both numbers. Then $q|N_j$ for some $0\leq j\leq i$.
If $j<i$, then $q|(p^{p^i}-1)$. But $q|(N_i-(p^{p^i}-1))$, so $q|N_i$, against Corollary \ref{correlpri2}.
If $j=i$, then from $q|(N_i-(p^{p^i}-1))$ we infer $q|(p^{p^i}-1)$, against Lemma \ref{relpri}.
\end{proof}

\begin{lemma}\label{pri2} For $i\geq 0$ and  $m\geq 1$, we have 
$$c_i^m\in L_{i-1} \Leftrightarrow c_i^{m(N_i-(p^{p^i}-1))}=a_{i-1}^m.$$
\end{lemma}

\begin{proof} We have $|L_{i-1}|=p^{p^i}$ by (\ref{size}) and $c_i^{N_i}=a_{i-1}$ by Lemma \ref{ac}, so
\[
\begin{aligned}
c_i^m\in L_{i-1} & \Leftrightarrow c_i^{mp^{p^i}}=c_i^m \Leftrightarrow  c_i^{m(p^{p^i}-1)}=1\\
& \Leftrightarrow  c_i^{m(N_i-(N_i-(p^{p^i}-1)))}=1 
\Leftrightarrow  a_{i-1}^m c_i^{-m(N_i-(p^{p^i}-1))}=1\\
& \Leftrightarrow c_i^{m(N_i-(p^{p^i}-1))}=a_{i-1}^m. \qedhere
\end{aligned} 
\]
\end{proof}

\begin{lemma}\label{mci} For $i\geq 0$, let $m$ be the order of $c_i$ modulo $L_{i-1}^\times$.
Then $m$ is relatively prime to the order of $a_{i-1}$.
\end{lemma}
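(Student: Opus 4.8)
The plan is to reduce the assertion to the coprimality already established in Lemma \ref{relpri}, by locating the two quantities $m$ and $O(a_{i-1})$ inside the two coprime numbers $N_i$ and $p^{p^i}-1$. Concretely, I would show that $m \mid N_i$ while $O(a_{i-1}) \mid p^{p^i}-1$, and then invoke $\gcd(N_i,\,p^{p^i}-1)=1$ to conclude.

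First I would argue that $m \mid N_i$. Since $L_i^\times$ is a finite group, the image of $c_i$ in the quotient group $L_i^\times/L_{i-1}^\times$ has finite order, and $m$ is precisely that order; equivalently, the set $\{n\in\Z : c_i^n\in L_{i-1}^\times\}$ is the subgroup $m\Z$ of $\Z$. By Lemma \ref{ac} we have $c_i^{N_i}=a_{i-1}\in L_{i-1}^\times$, so $N_i$ lies in this set, whence $m \mid N_i$. Next, because $a_{i-1}\in L_{i-1}^\times$ and $|L_{i-1}^\times|=p^{p^i}-1$ by (\ref{size}), Lagrange's theorem forces $O(a_{i-1}) \mid p^{p^i}-1$.

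Finally, Lemma \ref{relpri} states that $N_i$ and $p^{p^i}-1$ are relatively prime. Since $m$ divides the first and $O(a_{i-1})$ divides the second, any common divisor of $m$ and $O(a_{i-1})$ would be a common divisor of $N_i$ and $p^{p^i}-1$, hence equal to $1$. This gives $\gcd\bigl(m,\,O(a_{i-1})\bigr)=1$, as required.

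The argument is short and there is no serious obstacle; the only point that deserves care is the first step, namely the passage from the single relation $c_i^{N_i}\in L_{i-1}^\times$ to the divisibility $m \mid N_i$. I expect that to be the one place where I should be explicit, spelling out that ``order of $c_i$ modulo $L_{i-1}^\times$'' means the order in $L_i^\times/L_{i-1}^\times$, so that the exponents sending $c_i$ into $L_{i-1}^\times$ form exactly the multiples of $m$. Everything else is an immediate appeal to Lemma \ref{ac}, equation (\ref{size}), and Lemma \ref{relpri}.
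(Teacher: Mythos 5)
Your proof is correct and follows essentially the same strategy as the paper's: both trap $m$ inside $N_i$ via Lemma \ref{ac} and trap $O(a_{i-1})$ inside a number coprime to $N_i$. The only (minor) difference is that the paper bounds $O(a_{i-1})$ by $N_0\cdots N_{i-1}$ using Lemma \ref{ac2} and then appeals to Corollary \ref{correlpri2}, whereas you bound it by $p^{p^i}-1$ via Lagrange's theorem in $L_{i-1}^\times$ and appeal to Lemma \ref{relpri} directly; both work since $N_0\cdots N_{i-1}$ divides $p^{p^i}-1$, and your version even dispenses with the paper's separate remark about the case $i=0$.
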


\begin{proof} This is obvious if $i=0$, so we assume $i\geq 1$. We have
$a_{i-1}^{N_0\cdots N_{i-1}}=1$ by Lemma \ref{ac2}, $\gcd(N_0\cdots N_{i-1}, N_i)=1$ by Corollary \ref{correlpri2},
and $m|N_i$ by Lemma~\ref{ac}.
\end{proof}

\begin{prop}\label{c_ia_i} For $i\geq 0$, let $m$ be the order of $c_i$ modulo $L_{i-1}^\times$.
Then
$$
O(c_i)=m \times O(a_{i-1}).
$$
\end{prop}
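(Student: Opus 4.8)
The plan is to apply Lemma \ref{order} to the field extension $L_{i-1} \subset L_i$ with the element $c_i$, and then identify the resulting factor $O(c_i^m)$ with $O(a_{i-1})$. By definition $m = \min\{n \geq 1 \mid c_i^n \in L_{i-1}^\times\}$ is exactly the quantity appearing in Lemma \ref{order}, so that lemma immediately yields $O(c_i) = m \times O(c_i^m)$. It therefore suffices to prove that $O(c_i^m) = O(a_{i-1})$.

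First I would dispose of the trivial case $i=0$, where $L_{-1} = \F_p = \F_p^\times \cup \{0\}$ and $a_{-1} = 1$; here $c_0 \notin \F_p$ by Theorem \ref{as}, and the claim reduces to $O(c_0) = m \times O(1) = m$, which is consistent since $c_0^m \in \F_p^\times$ forces the statement to collapse correctly. For $i \geq 1$ the substantive work begins. Since $c_i^m \in L_{i-1}^\times$, both $c_i^m$ and $a_{i-1}$ lie in $L_{i-1}^\times$, so I can compare their orders directly. The key structural input is Lemma \ref{ac}, which gives $c_i^{N_i} = a_{i-1}$, together with the fact (Lemma \ref{ac}) that $m \mid N_i$; write $N_i = m k$ for some positive integer $k$, so that $(c_i^m)^k = c_i^{N_i} = a_{i-1}$. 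Thus $a_{i-1}$ is a power of $c_i^m$, which immediately gives $O(a_{i-1}) \mid O(c_i^m)$.

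The reverse divisibility is where I expect the main obstacle to lie, and this is precisely where Lemma \ref{mci} and the coprimality results are brought to bear. From $(c_i^m)^k = a_{i-1}$ one gets $O(c_i^m) = k \cdot O(a_{i-1}) / \gcd(k, O(a_{i-1}))$ in general, so it suffices to show $O(c_i^m) \mid O(a_{i-1})$, equivalently that raising $a_{i-1}$ to its own order already kills $c_i^m$. The obstruction is that $k$ and $O(a_{i-1})$ might share a common factor. To rule this out I would argue as follows: by Lemma \ref{mci}, the order $m$ of $c_i$ modulo $L_{i-1}^\times$ is relatively prime to $O(a_{i-1})$; moreover $O(c_i^m) = O(c_i)/m$ divides $N_0 \cdots N_i$ by Lemma \ref{ac2}, while $O(a_{i-1})$ divides $N_0 \cdots N_{i-1}$, and $\gcd(N_0 \cdots N_{i-1}, N_i) = 1$ by Corollary \ref{correlpri2}. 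The plan is to combine these coprimality facts to show that the factor $O(c_i^m)$ cannot acquire any prime dividing $N_i$ beyond what is forced, so that $O(c_i^m)$ and $O(a_{i-1})$ have exactly the same support of primes from $N_0 \cdots N_{i-1}$. Concretely, since $O(c_i^m)$ divides $N_i / m = k$ is governed by the $N_i$-part while $a_{i-1}$ is governed by the $N_0 \cdots N_{i-1}$-part, the relation $(c_i^m)^k = a_{i-1}$ together with $\gcd(k, O(a_{i-1})) = 1$ (which follows from the disjoint prime supports) forces $O(c_i^m) = O(a_{i-1})$. Substituting this back into Lemma \ref{order} completes the proof. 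The delicate point to get right is tracking which numbers $N_j$ the various orders divide and invoking Corollary \ref{correlpri2} at exactly the right moment to separate the $N_i$-part from the $N_0\cdots N_{i-1}$-part.
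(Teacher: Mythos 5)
Your overall route is genuinely different from the paper's and, once one step is repaired, it is actually shorter. The paper does not apply Lemma \ref{order} to $c_i$ itself: it first replaces $c_i$ by $c_i^{N_i-(p^{p^i}-1)}$, which generates the same cyclic group by Lemma \ref{pripri}, applies Lemma \ref{order} to that element, and uses Lemma \ref{pri2} to rewrite $c_i^{m(N_i-(p^{p^i}-1))}$ as $a_{i-1}^m$, finishing with Lemma \ref{mci}. You instead apply Lemma \ref{order} directly to $c_i$ and exploit the explicit relation $(c_i^m)^{N_i/m}=c_i^{N_i}=a_{i-1}$ from Lemma \ref{ac}; this bypasses Lemmas \ref{pripri}, \ref{pri2} and \ref{mci} entirely and handles $i=0$ uniformly (there $(c_0^m)^{N_0/m}=a_{-1}=1$).

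However, your justification of the key step $O(c_i^m)=O(a_{i-1})$ contains a genuine error. The displayed ``general formula'' $O(c_i^m)=k\cdot O(a_{i-1})/\gcd(k,O(a_{i-1}))$ is false (take $g$ of order $6$ and $k=3$: then $O(g^3)=2$ and the formula gives $3$, not $6$), and the condition $\gcd(k,O(a_{i-1}))=1$ that you propose to verify does \emph{not} force $O(c_i^m)=O(a_{i-1})$: with $g=c_i^m$ of order $6$, $k=3$ and $h=g^3$ of order $2$ one has $\gcd(k,O(h))=1$ yet $O(g)\neq O(h)$. The correct identity is $O\bigl((c_i^m)^k\bigr)=O(c_i^m)/\gcd\bigl(k,O(c_i^m)\bigr)$, so what you must show is $\gcd\bigl(k,O(c_i^m)\bigr)=1$. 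This is where your appeal to Lemma \ref{mci} is a red herring ($m$ being coprime to $O(a_{i-1})$ is irrelevant here), and your observation that $O(c_i^m)$ divides $N_0\cdots N_i$ is too weak on its own. The fix is immediate from facts you already have on the table: $c_i^m\in L_{i-1}^\times$ gives $O(c_i^m)\mid p^{p^i}-1$, while $k=N_i/m$ divides $N_i$, and $\gcd(N_i,\,p^{p^i}-1)=1$ by Lemma \ref{relpri}; hence $\gcd\bigl(k,O(c_i^m)\bigr)=1$ and $O(a_{i-1})=O(c_i^m)$, which with Lemma \ref{order} gives $O(c_i)=m\times O(a_{i-1})$. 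With that substitution your argument is complete and needs only Lemmas \ref{order}, \ref{ac} and \ref{relpri}.
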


\begin{proof} By Lemma \ref{ac2}, we have $c_i^{N_0\cdots N_i}=1$, so Lemma \ref{pripri} implies that $c_i$ and $c_i^{N_i-(p^{p^i}-1)}$ generate
the same subgroup. In particular, $m$ is also the order of 
$c_i^{N_i-(p^{p^i}-1)}$ modulo $L_{i-1}^\times$. This, Lemma \ref{order}, Lemma \ref{pri2}, and Lemma \ref{mci} yield
\[
O(c_i)=O(c_i^{N_i-(p^{p^i}-1)})=m\times O(a_{i-1}^m)=m\times O(a_{i-1}).\qedhere
\]
\end{proof}

\begin{theorem}\label{mismo} If $i\geq 0$, then $O(c_i)=O(a_i)$, except only when $p=2$ and $i=1$.
\end{theorem}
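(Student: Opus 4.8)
The plan is to prove $O(c_i)=O(a_i)$ by relating both orders through the quantities already controlled in this section, and then isolating the single exceptional case. The key structural fact is Proposition~\ref{c_ia_i}, which tells us $O(c_i)=m\cdot O(a_{i-1})$, where $m$ is the order of $c_i$ modulo $L_{i-1}^\times$. So the problem reduces to understanding $O(a_i)$ in comparable terms and showing the two products agree. First I would write $a_i=(c_{-1}\cdots c_i)^{p-1}=a_{i-1}^{\,?}\cdot c_i^{\,?}$; more precisely, since $a_i=(c_{-1}\cdots c_{i-1})^{p-1}c_i^{p-1}=a_{i-1}\,c_i^{p-1}$, I get the clean recursion $a_i=a_{i-1}c_i^{p-1}$. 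This identity is the engine of the proof, because it expresses $a_i$ using the same two building blocks, $a_{i-1}$ and $c_i$, whose interaction Proposition~\ref{c_ia_i} already analyzes.

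From here I would compute $O(a_i)$ using the decomposition into the part living in $L_{i-1}^\times$ and the part transverse to it, exactly as Lemma~\ref{order} prescribes. The crucial coprimality input is that $m\mid N_i$ (Lemma~\ref{ac}, since $c_i^{N_i}=a_{i-1}\in L_{i-1}^\times$) while $O(a_{i-1})\mid N_0\cdots N_{i-1}$ (Lemma~\ref{ac2}), and these are coprime by Corollary~\ref{correlpri2}; this is the same mechanism used in Lemma~\ref{mci}. I expect that the order of $a_i$ modulo $L_{i-1}^\times$ equals the order of $c_i^{p-1}$ modulo $L_{i-1}^\times$, and because $\gcd(p-1,N_i)=1$ (Lemma~\ref{relpri}, as $p-1\mid p^{p^i}-1$), raising to the $(p-1)$-st power does not change the order modulo $L_{i-1}^\times$, so $a_i$ and $c_i$ have the same order $m$ modulo $L_{i-1}^\times$. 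Applying Lemma~\ref{order} to $a_i$ then gives $O(a_i)=m\cdot O(a_i^{\,m})$, and since $a_i^m=a_{i-1}^m c_i^{m(p-1)}$ with $c_i^m\in L_{i-1}^\times$, the transverse part collapses and $O(a_i^m)=O(a_{i-1}^m)=O(a_{i-1})$ by the coprimality above. Combining, $O(a_i)=m\cdot O(a_{i-1})=O(c_i)$.

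The main obstacle, and the reason for the exceptional case, is the step asserting that $a_i$ and $c_i$ share the same order modulo $L_{i-1}^\times$. For $p$ odd this is transparent because $p-1$ is coprime to $N_i$, so the map $x\mapsto x^{p-1}$ is a bijection on the relevant cyclic quotient of order dividing $N_i$. When $p=2$, however, $p-1=1$ and $a_i=a_{i-1}c_i$, which forces me to handle the interaction between $a_{i-1}$ and $c_i$ directly; here the small orders can collide. The explicit computation in Lemma~\ref{ac3} shows precisely that for $p=2$, $i=1$ one has $O(c_1)=15$ while $O(a_1)=5$, so the equality genuinely fails there. Accordingly I would structure the argument by first disposing of $i=0$ (where $m=O(c_0)$, $a_{-1}=1$, and $a_0=c_0^{p-1}$ has the same order as $c_0$ since $\gcd(p-1,N_0)=1$), then running the general $i\ge 1$ argument above under the assumption $(p,i)\neq(2,1)$, and finally citing Lemma~\ref{ac3} to confirm that the $p=2$, $i=1$ case is the unique genuine exception.
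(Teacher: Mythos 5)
There is a genuine gap at the decisive step. You write that since $a_i^m=a_{i-1}^m c_i^{m(p-1)}$ with $c_i^m\in L_{i-1}^\times$, ``the transverse part collapses'' and $O(a_i^m)=O(a_{i-1}^m)$. But $c_i^{m(p-1)}$ is a nontrivial element of $L_{i-1}^\times$, and the coprimality you invoke ($m\mid N_i$ versus $O(a_{i-1})\mid N_0\cdots N_{i-1}$) says nothing about it: once $c_i^m$ lands in $L_{i-1}^\times$ its order divides $p^{p^i}-1$, not $N_i$. In fact, by Lemma \ref{pri2} and Lemma \ref{pripri} one has $\langle c_i^m\rangle=\langle a_{i-1}^m\rangle=\langle a_{i-1}\rangle$, so $a_i^m$ is a product of two generators of the \emph{same} cyclic group, and such a product can perfectly well fail to generate (e.g. $g\cdot g^{-1}=1$). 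This is exactly what happens in the exceptional case: for $p=2$, $i=1$ one has $m=5$, $a_1^5=c_1^5c_0^5=c_0^3=1$, while $a_0^5=c_0^2$ has order $3$, so your asserted equality $O(a_i^m)=O(a_{i-1}^m)$ is false there. You have also misdiagnosed the source of the exception: the claim that $a_i$ and $c_i$ have the same order modulo $L_{i-1}^\times$ is harmless for $p=2$ (it is trivial, since $a_i$ and $c_i$ then lie in the same coset); the failure occurs precisely in the collapse step you treat as routine.

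What is missing is the actual content of the paper's proof. There one uses Lemma \ref{pri2} to convert the product into a single power, $a_i^{m(N_i-(p^{p^i}-1))}=a_{i-1}^{m((p-1)+N_i-(p^{p^i}-1))}$, reducing the theorem to showing that $(p-1)+N_i$ is coprime to the order of $a_{i-1}$. This requires a genuine number-theoretic argument: a common prime factor $q$ of $(p-1)+N_i$ and $\tfrac{p^{p^i}-1}{p-1}$ satisfies $2p\equiv 1\pmod q$, whence $q\equiv 1\pmod p$ and ultimately $q=2p-1$, forcing $p=2$; and for $p=2$, $i\geq 2$ one computes $(p-1)+N_i-(p^{p^i}-1)=3$ and uses $\gcd(3,N_1\cdots N_{i-1})=1$. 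None of this is circumvented by your setup, so excluding $(p,i)=(2,1)$ by fiat does not repair the argument; the general step you assert simply has no justification for any $(p,i)$.
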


\begin{proof} By definition, $a_i=c_i^{p-1}a_{i-1}$. Here $\gcd(p-1,N_0\cdots N_i)=1$ by Lemma \ref{relpri}.
If $i=0$, then Lemma \ref{ac2} yields the desired result. Assume in what follows that $i\geq 1$.
In this case, Lemma \ref{ac2} yields that the orders of $c_i$ and $a_i$ modulo $L_{i-1}^\times$ are identical, say equal to $m$.
Thus by Lemma \ref{order},
\begin{equation}
\label{mix}
O(a_i)=m\times O(a_i^m).
\end{equation}
We have $a_{i}^{N_0\cdots N_{i}}=1$ by Lemma \ref{ac2} and $\gcd(N_i-(p^{p^i}-1), N_0\cdots N_i)=1$ by
Lemma~\ref{pripri}, so
\begin{equation}
\label{mix2}
O(a_i^m)=O(a_i^{m(N_i-(p^{p^i}-1))}).
\end{equation}
Since $a_i^m=c_i^{(p-1)m}a_{i-1}^m$, Lemma \ref{pri2} ensures that
\begin{equation}
\label{mix3}
a_i^{m(N_i-(p^{p^i}-1))}=c_i^{m(p-1)(N_i-(p^{p^i}-1))}a_{i-1}^{m(N_i-(p^{p^i}-1))}=a_{i-1}^{m((p-1)+N_i-(p^{p^i}-1))}.
\end{equation}
It follows from  (\ref{mix2}), (\ref{mix3}), and Lemma \ref{mci} that
\begin{equation}
\label{mix4}
O(a_i^m)=O(a_{i-1}^{(p-1)+N_i-(p^{p^i}-1)})=O(a_{i-1}^{(p-1)+N_i}),
\end{equation}
where the last equality follows from (\ref{prod}) and Lemma \ref{ac2}. 

Suppose, if possible, that $q$ is a prime factor of $\frac{p^{p^i}-1}{p-1}$ and $(p-1)+N_i$.
Then $p^{p^i}\equiv 1\mod q$, so $N_i\equiv p\mod q$. Thus $(p-1)+N_i\equiv 2p-1\mod q$ and therefore $2p\equiv 1\mod q$.
This yields
$$
1\equiv (2p)^{p^i}\equiv 2^{p^i} p^{p^i}\equiv 2^{p^i}\mod q.
$$
Thus the order of 2 modulo $q$ is a positive power of $p$, whence $q\equiv 1\mod p$. In particular, $q>p$, so for any $\ell\geq 2$,
we have $\ell q\geq 2q>2p>2p-1$.  Since $2p\equiv 1\mod q$, this forces $2p-1=q$. But then
$$
1\equiv q\equiv 2p-1\equiv -1\mod p,
$$
whence $p=2$. 

Suppose $p$ is odd. It follows from (\ref{prod}), Lemma \ref{ac2}, and the above analysis that
\begin{equation}
\label{mix5}
O(a_{i-1}^{(p-1)+N_i})=O(a_{i-1}).
\end{equation}
Combining Proposition \ref{c_ia_i} with (\ref{mix}), (\ref{mix4}), and (\ref{mix5}) we obtain the desired result.

Suppose next $p=2$. The cases $i=0$ and $i=1$ were discussed in Lemma~\ref{ac3}. Suppose that $i\geq 2$. Then (\ref{mix4}) becomes
\begin{equation}
\label{mix6}
O(a_i^m)=O(a_{i-1}^{1+N_i-(2^{2^i}-1)})=O(a_{i-1}^{1+(2^{2^i}+1)-(2^{2^i}-1)})=O(a_{i-1}^3).
\end{equation}
Here $a_{i-1}^{N_1\cdots N_{i-1}}=1$ by Lemma \ref{ac3}, and $\gcd(N_1\cdots N_{i-1},3)=1$ by Lemma \ref{correlpri2}, so (\ref{mix6})
implies
\begin{equation}
\label{mix7}
O(a_i^m)=O(a_{i-1}).
\end{equation}
Combining Proposition \ref{c_ia_i} with (\ref{mix}) and (\ref{mix7}) we obtain the sought result.
\end{proof}

\begin{rem} \label{p7rem} {\rm For $i\geq 0$, we write $M_i$ for the order of $c_i$ modulo $L_{i-1}^\times$. Since $[L_i,L_{i-1}]=p$,
it follows that $M_i>1$. By Lemma \ref{ac}, we have $M_i|N_i$, whence  $M_i\equiv 1\mod p^{i+1}$
by Corollary \ref{correlpri}. If $p$ is odd, then $N_i\equiv 1\mod 2$, so in this case 
$M_i\equiv 1\mod 2p^{i+1}$ and therefore $M_i\geq 1+2p^{i+1}$. If $p=2$ and $i\geq 2$, then by the Euler-Lucas theorem,
we have $M_i\equiv 1\mod 2^{i+2}$, which implies $M_i\geq 1+2^{i+2}$. 
If $p=2$, as the Fermat numbers $N_i$, $0\leq i\leq 4$, are prime,
it follows that $M_i=N_i$ for all $0\leq i\leq 4$. } \end{rem}

\begin{cor}
\label{z1} Let $i\geq 0$. If $p$ is odd, we have $O(c_i)=M_0\cdots M_i$.
If $p=2$, then $O(c_0)=N_0$, $O(c_1)=N_0N_1$, $O(c_2)=N_1N_2$, $O(c_3)=N_1N_2N_3$, 
$O(c_4)=N_1N_2N_3N_4$, and for $i\geq 5$, $O(c_i)=N_1N_2N_3N_4M_5\cdots M_i$.
\end{cor}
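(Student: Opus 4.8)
The plan is to turn Proposition \ref{c_ia_i} and Theorem \ref{mismo} into a single recursion and then unwind it. Proposition \ref{c_ia_i} gives, for every $i\geq 0$,
$$
O(c_i)=M_i\times O(a_{i-1}),
$$
where $M_i$ denotes the order of $c_i$ modulo $L_{i-1}^\times$. Theorem \ref{mismo} then lets us substitute $O(c_{i-1})$ for $O(a_{i-1})$ precisely when the index $i-1$ avoids the exceptional case of that theorem; applying Theorem \ref{mismo} with index $i-1$, its exclusion ``$p=2$ and $i-1=1$'' becomes ``$p=2$ and $i=2$.'' Hence the clean recursion
$$
O(c_i)=M_i\times O(c_{i-1})
$$
holds for all $i\geq 1$ except when $p=2$ and $i=2$.

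For odd $p$ the argument is immediate. The base case is $i=0$: since $a_{-1}=c_{-1}^{p-1}=1$, Proposition \ref{c_ia_i} gives $O(c_0)=M_0\times O(1)=M_0$. Inducting with the recursion above (which is valid at every step when $p$ is odd) yields $O(c_i)=M_i\times O(c_{i-1})=M_i\cdots M_0$, as claimed.

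For $p=2$ the single exception at $i=2$ is the one point requiring care, and it is exactly what makes the formula differ from the odd case. First I would record the small values: by Remark \ref{p7rem} (the primality of the Fermat numbers $N_0,\dots,N_4$) we have $M_i=N_i$ for $0\leq i\leq 4$, while Lemma \ref{ac3} supplies $O(c_0)=N_0$, $O(c_1)=N_0N_1$, and the crucial fact $O(a_1)=N_1$. Feeding $O(a_1)=N_1$ rather than $O(c_1)=N_0N_1$ into Proposition \ref{c_ia_i} at $i=2$ gives
$$
O(c_2)=M_2\times O(a_1)=N_2\times N_1=N_1N_2,
$$
so the factor $N_0$ drops out here and never returns. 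From $i=3$ onward the index $i-1$ is no longer exceptional, so the clean recursion $O(c_i)=M_i\times O(c_{i-1})$ applies; using $M_3=N_3$ and $M_4=N_4$ this yields $O(c_3)=N_1N_2N_3$ and $O(c_4)=N_1N_2N_3N_4$, and inducting for $i\geq 5$ gives $O(c_i)=M_5\cdots M_i\times O(c_4)=N_1N_2N_3N_4\,M_5\cdots M_i$.

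The only genuine obstacle is the bookkeeping around the exceptional index when $p=2$: one must invoke Theorem \ref{mismo} only where it is valid and instead use the explicit value $O(a_1)=N_1$ from Lemma \ref{ac3} at the lone step $i=2$, which is where the factor $N_0$ is permanently lost. Everything else is a routine induction on the recursion $O(c_i)=M_i\times O(c_{i-1})$, together with the identifications $M_i=N_i$ for $0\leq i\leq 4$ coming from the primality of the relevant Fermat numbers.
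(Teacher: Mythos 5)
Your proof is correct and follows essentially the same route as the paper, which simply says to apply Proposition \ref{c_ia_i} and Theorem \ref{mismo} repeatedly (together with Remark \ref{p7rem} when $p=2$). You have merely supplied the details the paper leaves implicit, in particular the correct handling of the exceptional step $i=2$ for $p=2$ via the value $O(a_1)=N_1$ from Lemma \ref{ac3}.
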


\begin{proof} If $p$ is odd, the result follows immediately by repeatedly applying Proposition~\ref{c_ia_i}
and Theorem \ref{mismo}.  If $p = 2$, the result follows from Remark \ref{p7rem} and a 
repeated application of Proposition~\ref{c_ia_i}
and Theorem \ref{mismo}.
\end{proof}

\begin{cor}\label{z2} Let $i\geq 0$. If $p$ is odd, then $O(c_i)=N_0\cdots N_i$ if and only if
$M_j=N_j$ for all $0\leq j\leq i$. If $p=2$ and $i\geq 5$, then $O(c_i)=N_1\cdots N_i$ if and only if
$M_j=N_j$ for all $5\leq j\leq i$.
\end{cor}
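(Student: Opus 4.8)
The plan is to derive this corollary directly from the order formulas of Corollary~\ref{z1}, so that the entire content reduces to an elementary observation about products of divisors. The one fact I would isolate at the start is that $M_j \mid N_j$ for every $j \geq 0$; this is recorded in Remark~\ref{p7rem} and rests on Lemma~\ref{ac}, since $c_i^{N_i} = a_{i-1} \in L_{i-1}^\times$ forces the order $M_i$ of $c_i$ modulo $L_{i-1}^\times$ to divide $N_i$. Consequently each $M_j$ is a positive integer with $M_j \leq N_j$, and $M_j = N_j$ exactly when equality holds in this bound.

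For the odd case, Corollary~\ref{z1} gives $O(c_i) = M_0 \cdots M_i$, which I would compare with the target $N_0 \cdots N_i$. One implication is immediate: if $M_j = N_j$ for all $0 \leq j \leq i$, then the two products agree. For the converse I would use that a product of positive integers, each bounded above by the corresponding $N_j$, satisfies $\prod_{j=0}^{i} M_j \leq \prod_{j=0}^{i} N_j$, and that equality of the two products forces equality in every factor: if some $M_k < N_k$ then, all factors being positive, the left product would be strictly smaller. Hence $O(c_i) = N_0 \cdots N_i$ implies $M_j = N_j$ for all $j$, settling the odd case.

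For $p = 2$ with $i \geq 5$, Corollary~\ref{z1} gives $O(c_i) = N_1 N_2 N_3 N_4 M_5 \cdots M_i$. Here I would cancel the common nonzero factor $N_1 N_2 N_3 N_4$ from the equation $O(c_i) = N_1 \cdots N_i$, which reduces the claim to $M_5 \cdots M_i = N_5 \cdots N_i$. The same elementary argument as above, now applied to the indices $5 \leq j \leq i$ together with $M_j \mid N_j$, shows this identity holds if and only if $M_j = N_j$ for all such $j$.

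I do not expect any real obstacle: the substantive work has already been carried out in Corollary~\ref{z1} and Remark~\ref{p7rem}, and what remains is the observation that a product of positive integer factors attains the product of their upper bounds precisely when each factor meets its bound. The only point deserving a moment's care is the clean cancellation of the low-index factors $N_1 N_2 N_3 N_4$ in the $p=2$ case, where one must note that these occur identically on both sides of the equation and so play no role in the equivalence.
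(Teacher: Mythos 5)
Your proposal is correct and is exactly the intended argument: the paper states Corollary~\ref{z2} without proof as an immediate consequence of Corollary~\ref{z1}, and your reduction via $M_j\mid N_j$ (hence $M_j\le N_j$, with equality of the products forcing equality factor by factor) together with cancellation of $N_1N_2N_3N_4$ in the $p=2$ case is precisely what makes it immediate.
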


\begin{cor}\label{z3} Let $i\geq 0$. If $p$ is odd, then $O(c_i)\geq (1+2p)\cdots (1+2p^{i+1})$.
If $p=2$ and $i\geq 5$, then $O(c_i)\geq N_1N_2N_3N_4 (1+2^7)\cdots (1+2^{i+2})$.
\end{cor}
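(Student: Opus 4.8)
The plan is to read off the value of $O(c_i)$ from Corollary \ref{z1} and then bound each factor $M_j$ from below using the estimates already recorded in Remark \ref{p7rem}. Both ingredients are in hand, so the argument reduces to multiplying the per-factor inequalities and checking that the exponents line up with the statement; I expect the only delicate point to be this index bookkeeping.

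First, suppose $p$ is odd. Then Corollary \ref{z1} gives $O(c_i)=M_0M_1\cdots M_i$, and Remark \ref{p7rem} supplies $M_j\geq 1+2p^{j+1}$ for every $j\geq 0$. Multiplying these bounds over $j=0,1,\dots,i$ yields
\[
O(c_i)=\prod_{j=0}^{i}M_j\geq \prod_{j=0}^{i}\bigl(1+2p^{j+1}\bigr)=(1+2p)(1+2p^{2})\cdots(1+2p^{i+1}),
\]
which is exactly the claimed lower bound.

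Next, suppose $p=2$ and $i\geq 5$. Here Corollary \ref{z1} gives $O(c_i)=N_1N_2N_3N_4\,M_5\cdots M_i$. I would leave the four explicit factors $N_1,N_2,N_3,N_4$ untouched, since they contribute exactly, and bound the remaining factors via the second estimate of Remark \ref{p7rem}, namely $M_j\geq 1+2^{j+2}$ for $j\geq 2$ (the strengthening furnished by the Euler--Lucas theorem). Taking the product over $j=5,\dots,i$ gives
\[
O(c_i)=N_1N_2N_3N_4\prod_{j=5}^{i}M_j\geq N_1N_2N_3N_4\prod_{j=5}^{i}\bigl(1+2^{j+2}\bigr)=N_1N_2N_3N_4(1+2^{7})\cdots(1+2^{i+2}),
\]
as required.

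The closest thing to an obstacle is verifying the ranges: one must confirm that $j=0,\dots,i$ produces precisely the exponents $1,\dots,i+1$ in the odd case, and that $j=5,\dots,i$ produces the exponents $7,\dots,i+2$ when $p=2$. The restriction $i\geq 5$ is exactly what guarantees that every estimated factor $M_j$ has $j\geq 2$, so that the bound $M_j\geq 1+2^{j+2}$ applies; since Remark \ref{p7rem} already certifies $M_j=N_j$ for $0\leq j\leq 4$, the split of Corollary \ref{z1} into exact factors and estimated factors is consistent, and no further analysis is needed.
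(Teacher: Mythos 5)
Your proof is correct and matches the paper's intended argument: the corollary is stated without proof precisely because it is an immediate consequence of Corollary \ref{z1} combined with the lower bounds $M_j\geq 1+2p^{j+1}$ (for $p$ odd) and $M_j\geq 1+2^{j+2}$ (for $p=2$, $j\geq 2$) recorded in Remark \ref{p7rem}, which is exactly the derivation you give. Your index bookkeeping is accurate in both cases.
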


It is shown in \cite[Theorem 5]{P} by means of computer calculations that if $p=2$ then 
$M_i=N_i$ for all $5\leq i\leq 11$, whence $O(c_i)=N_1\cdots N_i$ for all $2\leq i\leq 11$.

\section{A normal basis for $L_i$ over $L_{i-1}$}\label{normal}

Here we show that the Galois conjugates of $a_i$ form a normal  basis for $L_i$ over $L_{i-1}$ for all $i\geq 1$.
Given a field $K$ and $\alpha_1,\dots,\alpha_n\in K$, we write $V(\alpha_1,\dots,\alpha_n)$ for 
the Vandormonde matrix in $M_n(K)$ associated to $(\alpha_1,\dots,\alpha_n)$, that is, with rows equal to $(1,\dots,1),(\alpha_1,\dots,\alpha_n),\dots,(\alpha_1^{n-1},\dots,\alpha_n^{n-1})$.

\begin{theorem}\label{normalbasis}
 For $i\geq 0$, let $\sigma_i$ be the generator of the Galois group of $L_i$ over $L_{i-1}$
given by $x\mapsto x^{p^{p^i}}$. Then $\{a_i,{\sigma_i}(a_i),\dots, {\sigma_i^{p-1}}(a_i)\}$ is a basis for $L_i$ over $L_{i-1}$.
\end{theorem}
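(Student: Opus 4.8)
The plan is to reduce the statement to the linear independence of the $\mathrm{Gal}(L_i/L_{i-1})$-conjugates of $c_i^{p-1}$ and then to recognize the relevant change-of-basis matrix as, essentially, a Vandermonde matrix. First I would record that $a_i=a_{i-1}c_i^{p-1}$, where $a_{i-1}\in L_{i-1}^\times$ is fixed by $\sigma_i$. Hence $\sigma_i^{\,j}(a_i)=a_{i-1}\,\sigma_i^{\,j}(c_i)^{p-1}$ for $0\le j\le p-1$. Since the minimal polynomial of $c_i$ over $L_{i-1}$ is $X^p-X-a_{i-1}$, whose roots are $c_i,c_i+1,\dots,c_i+(p-1)$ (as in Theorem~\ref{as}), and $\sigma_i$ generates the Galois group, the conjugates $\sigma_i^{\,j}(c_i)$, $0\le j\le p-1$, are precisely the elements $c_i+k$ with $k\in\F_p$. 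Because multiplication by the nonzero scalar $a_{i-1}\in L_{i-1}$ carries a basis to a basis, it suffices to prove that $\{(c_i+k)^{p-1}:k\in\F_p\}$ is a basis of $L_i$ over $L_{i-1}$.

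Next, as $[L_i:L_{i-1}]=p$ and $c_i$ has degree $p$ over $L_{i-1}$, the powers $1,c_i,\dots,c_i^{p-1}$ form a basis of $L_i$ over $L_{i-1}$; it therefore suffices to check that the $p$ elements $(c_i+k)^{p-1}$ are linearly independent. Expanding by the binomial theorem,
\[
(c_i+k)^{p-1}=\sum_{j=0}^{p-1}\binom{p-1}{j}\,k^{\,p-1-j}\,c_i^{\,j},
\]
so the matrix expressing the family $\{(c_i+k)^{p-1}\}_{k\in\F_p}$ in terms of $\{c_i^{\,j}\}_{0\le j\le p-1}$ has $(k,j)$-entry $\binom{p-1}{j}k^{\,p-1-j}$. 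I would then argue that this matrix is invertible.

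Finally, I would factor out the column scalars $\binom{p-1}{j}$ (each nonzero modulo $p$) and reverse the order of the columns; what remains is, up to sign, the Vandermonde matrix $V(0,1,\dots,p-1)$ on the $p$ distinct nodes $0,1,\dots,p-1\in\F_p$, whose determinant $\prod_{0\le k<k'\le p-1}(k'-k)$ is nonzero. Hence the change-of-basis matrix is invertible and the conjugates of $a_i$ form a basis. The only points requiring care---and the mild obstacle here---are the bookkeeping at $k=0$, where the convention $0^{0}=1$ ensures that the row indexed by $k=0$ still fits the Vandermonde pattern after the column reversal, and the verification that each $\binom{p-1}{j}$ is nonzero modulo $p$ for $0\le j\le p-1$ (indeed $\binom{p-1}{j}\equiv(-1)^j\bmod p$); neither is serious, so essentially all the content lies in the Vandermonde identification.
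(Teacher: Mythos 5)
Your proposal is correct and follows essentially the same route as the paper: both reduce to the conjugates of $c_i$ being $c_i$ plus the elements of $\F_p$, expand $(c_i+k)^{p-1}$ binomially, and identify the change-of-basis matrix as an invertible diagonal matrix times a Vandermonde matrix on distinct nodes. Your extra care with the nonvanishing of $\binom{p-1}{j}$ modulo $p$ and the $k=0$ bookkeeping only makes explicit what the paper leaves implicit.
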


\begin{proof} We know that $\{c_i^{p-1},\dots, c_i,1\}$ is a basis of $L_i$ over $L_{i-1}$. Let $M$
be the matrix whose columns are the coordinates of $a_i,{\sigma_i}(a_i),\dots, {\sigma_i^{p-1}}(a_i)$
relative to this basis. We claim that $M$ is invertible. Indeed, by Lemma \ref{use}, we have 
$${\sigma_i}(c_i)=c_i+(-1)^i,{\sigma_i^2}(c_i)=c_i+2(-1)^i,\dots, {\sigma_i^{p-1}}(c_i)=c_i+(p-1)(-1)^i,$$ and therefore
$$a_i=a_{i-1} c_i^{p-1}, {\sigma_i}(a_i)=a_{i-1} (c_i+(-1)^i)^{p-1}, {\sigma_i^2}(a_i)=a_{i-1} (c_i+2(-1)^i)^{p-1},\dots,
$$
$$
{\sigma_i^{p-2}}(a_i)=a_{i-1} (c_i+(p-2)(-1)^i)^{p-1}, {\sigma_i^{p-1}}(a_i)=a_{i-1} (c_i+(p-1)(-1)^i)^{p-1}.
$$
Thus $M=DVS$, where $D=\mathrm{diag}(\binom{p-1}{0}, \binom{p-1}{1},\dots,\binom{p-1}{p-1})$,
$V$ is the Vandermonde matrix $V(0\times (-1)^i, 1\times (-1)^i,\dots, (p-1)\times (-1)^i)$, and $S=\mathrm{diag}(a_{i-1},\dots,a_{i-1})$.

\end{proof}


\section{Minimal polynomials}\label{smin}

\begin{theorem}\label{u} Let $K/F$ be a finite Galois extension with Galois group $G$.
If $f\in F[X]$ is monic and irreducible and $g\in K[X]$ is a monic irreducible factor of $f$ in $K[X]$, then
$$
f=\underset{h\in G g}\Pi h,
$$
where $Gg$ is the orbit of $g$ under the action of $G$.  In particular, we have that $\deg f=\deg h\times |G|/|G_g|$, where $G_g$ is the stabilizer of $g$ in $G$.
\end{theorem}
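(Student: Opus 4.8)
The plan is to exploit the action of $G$ on $K[X]$ obtained by letting each $\sigma\in G$ act on the coefficients of a polynomial, together with the basic fact that, because $K/F$ is Galois, a polynomial in $K[X]$ lies in $F[X]$ precisely when it is fixed by every element of $G$ (i.e. $F=K^G$). Set $P=\prod_{h\in Gg}h$, the product over the distinct polynomials in the orbit of $g$; the entire goal reduces to showing $P=f$, after which the degree formula follows at once.

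First I would verify that $P\in F[X]$. Each $\sigma\in G$ carries a monic polynomial to a monic polynomial of the same degree, and it merely permutes the orbit $Gg$, so $\sigma(P)=\prod_{h\in Gg}\sigma(h)=\prod_{h\in Gg}h=P$. As this holds for every $\sigma\in G$ and $F=K^G$, I conclude that $P\in F[X]$ and that $P$ is monic.

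Next I would show $P\mid f$ in $K[X]$. Since $g\mid f$ and $f\in F[X]$ is fixed by $G$, applying any $\sigma\in G$ gives $\sigma(g)\mid\sigma(f)=f$; hence every $h\in Gg$ divides $f$. The elements of $Gg$ are pairwise distinct monic irreducibles in the unique factorization domain $K[X]$, so they are pairwise coprime and their product $P$ divides $f$. Write $f=Pq$ with $q\in K[X]$. Because $f$ and $P$ are monic and both lie in $F[X]$, the quotient $q$ also lies in $F[X]$: applying $\sigma\in G$ to $f=Pq$ and using $\sigma(f)=f$ and $\sigma(P)=P$ gives $Pq=P\sigma(q)$, whence $q=\sigma(q)$ for all $\sigma$. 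Since $f$ is irreducible in $F[X]$ and $\deg P\geq\deg g\geq 1$, the factor $q$ must be a unit; monicity then forces $q=1$, so $f=P$, as claimed.

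Finally, for the stated degree relation I would note that every $h=\sigma(g)$ satisfies $\deg h=\deg g$, so $\deg f=\sum_{h\in Gg}\deg h=|Gg|\,\deg g$, and the orbit-stabilizer theorem gives $|Gg|=|G|/|G_g|$, yielding $\deg f=\deg h\times |G|/|G_g|$. The one point that genuinely requires care is the passage from divisibility in $K[X]$ to an identity over $F$: one must confirm that factoring off the monic $P\in F[X]$ keeps the cofactor $q$ in $F[X]$, and this is exactly where the $G$-invariance of both $f$ and $P$ is used. Everything else is routine bookkeeping with the orbit.
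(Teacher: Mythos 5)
Your proof is correct and follows the same route as the paper: the paper simply asserts that $\prod_{h\in Gg}h$ is a non-constant monic factor of $f$ lying in $F[X]$ and concludes by irreducibility, while you supply the details (the $G$-invariance giving membership in $F[X]$, pairwise coprimality of the distinct monic irreducibles in the orbit giving divisibility, and the descent of the cofactor to $F[X]$). No substantive difference in approach.
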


\begin{proof} It is clear that $\underset{h\in G g}\Pi h\in F[X]$ is a non-constant monic factor of $f$ in $K[X]$ and hence in $F[X]$.
As $f$ is irreducible, $f=\underset{h\in G g}\Pi h$.
\end{proof}

\begin{theorem}\label{v} Let $K/F$ be a finite Galois extension with Galois group $G$. Then for $\alpha\in K$, we have
$$
f_{\alpha,F}=\underset{\beta\in G \alpha}\Pi (X-\beta).
$$
In particular, $\deg f=|G|/|G_\alpha|$ is a factor of $|G|$. Also in particular, if $F=F_q$ and $K=F_{q^n}$, then
$G=\langle T\rangle$, where $T(x)=x^q$ for all $x\in K$, $T$ has order $n$, and if $m$ is the smallest positive integer
such that $T^m(\alpha)=\alpha$, then $m|n$ and 
$$
f_{\alpha,F}=(X-\alpha)\cdots (X-T^{m-1}(\alpha)).
$$
\end{theorem}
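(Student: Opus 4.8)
The plan is to deduce everything from Theorem \ref{u} by choosing the linear polynomial $g=X-\alpha$ as the distinguished irreducible factor. Since $\alpha\in K$ is a root of its minimal polynomial $f=f_{\alpha,F}$, which is monic and irreducible over $F$, the polynomial $X-\alpha$ is a monic irreducible (being linear) factor of $f$ in $K[X]$, so Theorem \ref{u} applies. First I would make explicit the action of $G$ on $K[X]$: each $\sigma\in G$ acts coefficientwise, whence $\sigma\cdot(X-\alpha)=X-\sigma(\alpha)$. Consequently the orbit $G(X-\alpha)$ equals $\{X-\beta : \beta\in G\alpha\}$, and the stabilizer $G_g$ of $X-\alpha$ coincides with the stabilizer $G_\alpha$ of $\alpha$.

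With this identification in hand, Theorem \ref{u} immediately gives $f_{\alpha,F}=\prod_{h\in G(X-\alpha)}h=\prod_{\beta\in G\alpha}(X-\beta)$, and its degree formula yields $\deg f=\deg h\times|G|/|G_g|=|G|/|G_\alpha|$, which divides $|G|$ by the orbit-stabilizer theorem.

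For the finite field specialization I would invoke the standard structure theorem: $\mathrm{Gal}(\F_{q^n}/\F_q)$ is cyclic of order $n$, generated by the Frobenius automorphism $T\colon x\mapsto x^q$. Letting $m$ be the least positive integer with $T^m(\alpha)=\alpha$, the stabilizer $G_\alpha$ is the cyclic subgroup $\langle T^m\rangle$ of the order-$n$ cyclic group $\langle T\rangle$; hence $m\mid n$ and $|G_\alpha|=n/m$, so the orbit $G\alpha$ has exactly $m$ elements. Minimality of $m$ forces $\alpha,T(\alpha),\dots,T^{m-1}(\alpha)$ to be pairwise distinct, since $T^i(\alpha)=T^j(\alpha)$ with $0\le i<j<m$ would give $T^{j-i}(\alpha)=\alpha$ with $0<j-i<m$, a contradiction; thus these are precisely the $m$ elements of the orbit. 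Substituting into the product formula yields $f_{\alpha,F}=(X-\alpha)\cdots(X-T^{m-1}(\alpha))$.

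As for obstacles, there is none of real substance: the content is already packaged in Theorem \ref{u}, and the only two points needing care are the bookkeeping that the stabilizer of the linear polynomial $X-\alpha$ agrees with the stabilizer of $\alpha$, and the elementary count showing the $m$ listed conjugates are distinct and exhaust the orbit. Both are routine.
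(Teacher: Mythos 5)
Your proposal is correct and follows exactly the route the paper intends: the paper's proof of Theorem \ref{v} is just the one line ``Immediate consequence of Theorem \ref{u},'' and your argument spells out precisely that deduction, taking $g=X-\alpha$ and identifying the stabilizers. Nothing to add.
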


\begin{proof} Immediate consequence of Theorem \ref{u}.
\end{proof}

\begin{theorem}\label{w} If $i\geq 0$, then $c_i$ and $a_i$ have degree $p^{i+1}$ over $\F_p$.
\end{theorem}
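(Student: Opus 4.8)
The plan is to show that each of $c_i$ and $a_i$ generates the full field $L_i$ over $\F_p$, from which the stated degree follows. First I would record that, by (\ref{dim}) and multiplicativity of degrees along the tower (\ref{torref}), $[L_i:\F_p]=p^{i+1}$, in agreement with (\ref{size}). Since $c_i$ and $a_i$ both lie in $L_i$, their degrees over $\F_p$ divide $p^{i+1}$ and are therefore powers of $p$. The subfields of the finite field $L_i$ correspond bijectively to the divisors of $p^{i+1}$, so they form the single chain $\F_p=L_{-1}\subset L_0\subset\cdots\subset L_i$; in particular $L_{i-1}$ is the unique maximal proper subfield of $L_i$, and every proper subfield of $L_i$ is contained in $L_{i-1}$. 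Consequently, to prove that a given element of $L_i$ has degree exactly $p^{i+1}$ over $\F_p$ it suffices to check that it does not lie in $L_{i-1}$.

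For $c_i$ this is immediate. By (\ref{dim}) we have $[L_i:L_{i-1}]=p>1$, while $L_i=L_{i-1}[c_i]$ by construction, so $c_i\notin L_{i-1}$. Hence $\F_p[c_i]$ is not a proper subfield of $L_i$, which forces $\F_p[c_i]=L_i$ and therefore $\deg_{\F_p}c_i=p^{i+1}$.

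The case of $a_i$ is the only point requiring a short argument, and I expect it to be the main (albeit minor) obstacle. Writing $a_i=a_{i-1}\,c_i^{p-1}$, I would first note that $a_{i-1}\neq 0$, since $a_{i-1}$ has finite multiplicative order by Lemma \ref{ac2}, so $a_{i-1}\in L_{i-1}^\times$. Applying Lemma \ref{truno} to $L_i=L_{i-1}[c_i]$ shows that $c_i^{p-1}$ has degree $p$ over $L_{i-1}$, whence $L_{i-1}[c_i^{p-1}]=L_i$. Multiplying by the nonzero scalar $a_{i-1}\in L_{i-1}$ does not alter the generated field, so $L_{i-1}[a_i]=L_{i-1}[c_i^{p-1}]=L_i$, and in particular $a_i\notin L_{i-1}$. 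The subfield argument above then yields $\F_p[a_i]=L_i$ and $\deg_{\F_p}a_i=p^{i+1}$. The only mildly delicate step is this verification that $a_i$ (equivalently $c_i^{p-1}$) still generates $L_i$ over $L_{i-1}$; once that is in hand, everything else is forced by the rigidity of the subfield lattice of a finite field.
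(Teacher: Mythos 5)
Your proof is correct, and the overall reduction agrees with the paper's: since $[L_i:\F_p]=p^{i+1}$ is a prime power, the subfields of $L_i$ form the single chain $L_{-1}\subset L_0\subset\cdots\subset L_i$, so an element of $L_i$ has degree $p^{i+1}$ over $\F_p$ exactly when it lies outside $L_{i-1}$; the paper phrases the same condition as not being fixed by $x\mapsto x^{p^{p^i}}$, via Theorem \ref{v}. Where you genuinely diverge is the verification for $a_i$. The paper supposes $a_i^{p^{p^i}}=a_i$, invokes Lemma \ref{use} to deduce $c_i^{p-1}=(c_i+(-1)^i)^{p-1}$, and reaches a contradiction because $(c_i+(-1)^i)c_i^{-1}$ would then be a $(p-1)$-st root of unity, hence in $\F_p$, forcing $c_i^{-1}\in\F_p$ against (\ref{dim}). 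You instead note that Lemma \ref{truno} already gives $L_{i-1}[c_i^{p-1}]=L_{i-1}[c_i]=L_i$ (from $c_i^{p-1}=1+a_{i-1}/c_i$), and that multiplying by the unit $a_{i-1}\in L_{i-1}^\times$ does not change the field generated over $L_{i-1}$, so $L_{i-1}[a_i]=L_i$ and in particular $a_i\notin L_{i-1}$. Your route avoids both the Frobenius computation and the roots-of-unity step and is, if anything, shorter; the paper's route has the incidental merit of reusing Lemma \ref{use}, which it needs elsewhere anyway. Both arguments apply verbatim at $i=0$, where $a_{-1}=1$.
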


\begin{proof} By Theorem \ref{v} the degree of $c_i$ over $\F_p$ is a factor of $p^{i+1}$.
If the degree is not $p^{i+1}$, then necessarily $c_i^{p^{p^{i}}}=c_i$,
which contradicts Lemma \ref{use}. Likewise, if the degree of $a_i$ is not $p^{i+1}$, then Lemma \ref{use}
yields
$$
c_i^{p-1}a_{i-1}=a_i=a_i^{p^{p^{i}}}=(c_i+(-1)^i)^{p-1} a_{i-1},
$$
which implies $c_i^{p-1}=(c_i+(-1)^i)^{p-1}$. It follows that $(c_i+(-1)^i)c_i^{-1}\in \F_p$.
But $(c_i+(-1)^i)c_i^{-1}=1+(-1)^i c_i^{-1}$, which is not in $\F_p$ by (\ref{dim}).
\end{proof}

\begin{theorem}\label{x} For $i\geq 1$, we have
$$
f_{c_i,\F_p}(X)=f_{a_{i-1},\F_p}(X^p-X).
$$
In particular, this polynomial is irreducible in $\F_p[X]$.
\end{theorem}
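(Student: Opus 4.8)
The plan is to exploit the defining relation $c_i^p-c_i=a_{i-1}$ together with the degree computation of Theorem \ref{w}. Writing $g=f_{a_{i-1},\F_p}$ for the minimal polynomial of $a_{i-1}$ over $\F_p$, I would first record that $g$ is monic of degree $p^i$ by Theorem \ref{w} applied to the index $i-1$, which is legitimate since $i\geq 1$. Consequently the substituted polynomial $g(X^p-X)\in\F_p[X]$ is monic of degree $p\cdot p^i=p^{i+1}$: its top-degree term arises solely from $(X^p-X)^{p^i}$, whose leading term is $X^{p^{i+1}}$, so no cancellation can lower the degree.

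Next I would verify that $c_i$ is a root of $g(X^p-X)$. This is immediate from the construction of the tower: since $c_i^p-c_i=a_{i-1}$, we have $g(c_i^p-c_i)=g(a_{i-1})=0$. Hence the minimal polynomial $f_{c_i,\F_p}$ divides $g(X^p-X)$ in $\F_p[X]$, as $f_{c_i,\F_p}$ divides every monic polynomial of $\F_p[X]$ annihilating $c_i$.

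To finish, I would compare degrees. By Theorem \ref{w}, $f_{c_i,\F_p}$ is monic of degree $p^{i+1}$, which is exactly the degree of $g(X^p-X)$ computed above. Since a monic polynomial that divides a monic polynomial of the same degree must equal it, we conclude $f_{c_i,\F_p}=g(X^p-X)=f_{a_{i-1},\F_p}(X^p-X)$, which is the asserted identity. The final claim that this polynomial is irreducible in $\F_p[X]$ is then automatic, since it coincides with a minimal polynomial over $\F_p$.

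There is no serious obstacle here; the whole argument is a matching of degrees anchored on the single relation $c_i^p-c_i=a_{i-1}$, and the remaining content is the standard divisibility property of minimal polynomials. The only point deserving a moment's attention is confirming that $\deg g(X^p-X)$ is genuinely $p^{i+1}$ and not smaller, but this follows at once because composing with the degree-$p$ polynomial $X^p-X$ multiplies degrees.
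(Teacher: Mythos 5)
Your proof is correct, but it takes a different route from the paper's. The paper works top--down: it starts from the irreducible polynomial $f_{c_i,\F_p}$, invokes its Theorem \ref{u} to factor it over $L_{i-1}$ as the product of the $\mathrm{Gal}(L_{i-1}/\F_p)$-orbit of $f_{c_i,L_{i-1}}=X^p-X-a_{i-1}$, counts that orbit using the degree of $a_{i-1}$ from Theorem \ref{w}, and recognizes the resulting product $\prod_j\bigl(X^p-X-a_{i-1}^{p^j}\bigr)$ as $f_{a_{i-1},\F_p}(X^p-X)$. You instead work bottom--up: $c_i$ is a root of $f_{a_{i-1},\F_p}(X^p-X)$ because $c_i^p-c_i=a_{i-1}$, so $f_{c_i,\F_p}$ divides it, and the two monic polynomials have equal degree $p^{i+1}$ by Theorem \ref{w} (applied at both indices $i$ and $i-1$), hence coincide. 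Your argument is more elementary --- it needs no Galois-orbit machinery, only the divisibility property of minimal polynomials and a degree count, and your check that composition with $X^p-X$ genuinely multiplies the degree is the right point to flag. What the paper's approach buys in exchange is the explicit factorization of $f_{c_i,\F_p}$ into Artin--Schreier factors over $L_{i-1}$, which makes the identity visible rather than merely forced by degrees; it also leans on Theorem \ref{w} only for $a_{i-1}$, whereas you need the (slightly harder) statement for $c_i$ as well. Both proofs are complete and correct.
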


\begin{proof} By definition, $f_{c_i,L_{i-1}}(X)=X^p-X-a_{i-1}$. By Theorem \ref{w},
$a_{i-1}$ has exactly $p^{p^i}$ conjugates over $\F_p$, so Theorem \ref{u} yields
\[
f_{c_i,\F_p}(X)=(X^p-X-a_{i-1})(X^p-X-a_{i-1}^p)\cdots (X^p-X-a_{i-1}^{p^{p^i-1}})=f_{{a_{i-1},\F_p}}(X^p-X).\qedhere
\]
\end{proof}

\begin{cor}\label{y} The minimal polynomial of $c_1$ over $\F_p$ is equal to $$X^{p^2}+(X^p-X-1)^{p-1}-X^p-2.$$
In particular, this polynomial is irreducible in $\F_p[X]$.
\end{cor}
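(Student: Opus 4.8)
The plan is to specialize Theorem~\ref{x} to the case $i=1$ and then evaluate the two ingredients explicitly. Theorem~\ref{x} gives
$$
f_{c_1,\F_p}(X)=f_{a_0,\F_p}(X^p-X),
$$
so the entire task reduces to (i)~identifying $f_{a_0,\F_p}$ in closed form and (ii)~carrying out the substitution $X\mapsto X^p-X$ and simplifying.

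For step~(i) I would invoke Lemma~\ref{truno} with $F=\F_p=L_{-1}$ and $a=a_{-1}=1$. The polynomial $X^p-X-1$ has no root in $\F_p$, since $x^p-x=0$ for every $x\in\F_p$, so the lemma applies with $\alpha=c_0$ and $\alpha^{p-1}=c_0^{p-1}=a_0$. Substituting $a=1$ into the formula $X^p+(X-1)^{p-1}-a^{p-1}-1$ of Lemma~\ref{truno} yields
$$
f_{a_0,\F_p}(X)=X^p+(X-1)^{p-1}-2.
$$

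For step~(ii) I would replace $X$ by $X^p-X$ throughout, obtaining
$$
f_{c_1,\F_p}(X)=(X^p-X)^p+(X^p-X-1)^{p-1}-2.
$$
The only point requiring a small argument is the expansion $(X^p-X)^p=X^{p^2}-X^p$, which follows from the additivity of the Frobenius map on the characteristic-$p$ ring $\F_p[X]$ (equivalently, the freshman's dream $(A+B)^p=A^p+B^p$). Substituting this in and rearranging gives exactly
$$
X^{p^2}+(X^p-X-1)^{p-1}-X^p-2,
$$
as claimed. Irreducibility is immediate, since Theorem~\ref{x} already asserts that $f_{c_1,\F_p}$ is irreducible in $\F_p[X]$ (and it is in any case the minimal polynomial of $c_1$ over $\F_p$).

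I do not anticipate a genuine obstacle here, as the result is essentially a direct computation; the only place demanding care is the Frobenius expansion of $(X^p-X)^p$, which must be justified by characteristic $p$ rather than by a naive binomial expansion, and the bookkeeping of the constant term coming from $a^{p-1}+1=2$ in Lemma~\ref{truno}.
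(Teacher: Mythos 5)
Your proposal is correct and follows exactly the paper's own route: apply Theorem~\ref{x} with $i=1$, compute $f_{a_0,\F_p}=f_{c_0^{p-1},\F_p}(X)=X^p+(X-1)^{p-1}-2$ via Lemma~\ref{truno} with $a=1$, then substitute $X\mapsto X^p-X$ and simplify $(X^p-X)^p=X^{p^2}-X^p$ by Frobenius. No differences worth noting.
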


\begin{proof} We have $f_{c_0,\F_p}(X)=X^p-X-1$, so $f_{c_0^{p-1},\F_p}(X)=X^p+(X-1)^{p-1}-2$ by Lemma \ref{truno}.
Thus, by Theorem \ref{x}, 
$$
\begin{aligned}
f_{c_1,\F_p}(X)&=f_{c_0^{p-1},\F_p}(X^p-X)=(X^p-X)^p+(X^p-X-1)^{p-1}-2\\
&=X^{p^2}+(X^p-X-1)^{p-1}-X^p-2.\qedhere
\end{aligned}
$$
\end{proof}

\end{document}